\newtheorem{Thm}{Theorem}
\newtheorem{lem}{Lemma}
\newtheorem{prop}{Proposition}
\newtheorem{remark}{Remark}
\newtheorem{defn}{Definition}
\def\*#1{\mathbf{#1}} 
\begin{document}
\title{Cauchy surface area formula in the Heisenberg groups}
\author{Yen-Chang Huang}
\address{National University of Tainan, Tainan city, Taiwan R.O.C.}
\curraddr{}
\email{ychuang@mail.nutn.edu.tw}

\subjclass[2010]{Primary: 53C17, Secondary: 53C65, 53C23}
\keywords{Sub-Riemannian manifolds, Pseudo-hermitian geometry, Cauchy surface area formula, the Heisenberg groups}

\dedicatory{}

\begin{abstract}
We show an analogy of Cauchy's surface area formula for the Heisenberg groups $\mathbb{H}_n$ for $n\geq 1$, which states that the p-area of any compact hypersurface $\Sigma$ in $\mathbb{H}_n$ with its p-normal vector defined almost everywhere on $\Sigma$ is the average of its projected p-areas onto the orthogonal complements of all p-normal vectors of the Pansu spheres (up to a constant). The formula provides a geometric interpretation of the p-areas defined by Cheng-Hwang-Malchiodi-Yang \cite{CHMY1} in $\mathbb{H}_1$ and Cheng-Hwang-Yang \cite{CHY} in $\mathbb{H}_n$ for $n\geq 2$. We also characterize the projected areas for rotationally symmetric domains in $\mathbb{H}_n$, namely, for any rotationally symmetric domain with boundary in $\mathbb{H}_n$, its projected p-area onto the orthogonal complement of any normal vector of the Pansu spheres is a constant, independent of the choices of the projected directions.
\end{abstract}
\maketitle

\section{Introduction}
%

One of the most important problems in Integral Geometry is how to obtain geometric information (for instance, the lengths, surface areas, and volumes) of objects by the lower dimensional geometric quantities and Cauchy's surface area formula in $\mathbb{R}^n$ provides one of the solutions to this problem. It says that the surface area of any convex body in the $n$-dimensional Euclidean space is equal to the average of the areas of its orthogonal projections onto all subspaces. By a convex body we mean a compact convex set in $\mathbb{R}^n$ with non-empty interior. The formula was originally proved by Augustin Cauchy in 1841 \cite{Cau41} and in 1850 \cite{Cau50} for $n = 2, 3$, respectively. After that, the formula was generalized by Kubota \cite{Kub25}, Minkowski \cite{Min}, and Bonnesen \cite{Bon29}. The literature on the formula and its applications to other scientific fields are quite large, for instance, we refer the interested readers to \cite{KR,Gardner,Schneider,Santalo} for the viewpoints of Integral Geometry and Convex Geometry, and \cite{Vouk}, \cite[p 290]{MTL} for the applications to the measurement of elementary particles in chemistry. Recently a simple proof of the formula was presented in \cite{TV} by a key observation of algebraic property for Minkowski sum of sets in $\mathbb{R}^n$.

Let $K$ be any compact convex subset in $\mathbb{R}^n$. If $V$ is an $(n-1)$-dimensional subspace of $\mathbb{R}^n$, denote $K|V$ by the orthogonal projection of $K$ onto $V$ and $\omega_{n-1}$ by the $(n-1)$-dimensional volume of the unit ball in $\mathbb{R}^{n-1}$. At first, let us recall the classical result derived by Cauchy:

\begin{Thm}[Cauchy's surface area formula, Theorem 5.5.2 \cite{KR}]\label{cauchyrn}
For any $n$-dimensional compact convex subset $K$ in $\mathbb{R}^n$, denote the surface area $S(K)$ of $K$, then we have
\begin{align}
S(K)=\frac{1}{\omega_{n-1}}\int_{u\in\mathbb{S}^{n-1}}\mu_{n-1}(K| u^\perp) dS_u.
\end{align}
Here $\mathbb{S}^{n-1}$ denotes the $(n-1)$-dimensional standard unit sphere, $dS_u$ is the surface area element at $u\in \mathbb{S}^{n-1}$, and $\mu_{n-1}(K|u^\perp)$ is the $(n-1)$-dimensional volume of the orthogonal projection of $K$ onto the subspace $u^\perp=\{v\in \mathbb{R}^n; v \text{ is perpendicular to } u \}$.
\end{Thm}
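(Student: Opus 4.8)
The plan is to deduce the formula from a single pointwise identity describing how the orthogonal projection acts on the boundary $\partial K$, then to average over $\mathbb{S}^{n-1}$ and pin down the constant by testing on the unit ball. Since both sides of the asserted equality are continuous functionals on the space of convex bodies for the Hausdorff metric --- the map $K\mapsto\mu_{n-1}(K|u^{\perp})$ is even continuous uniformly in $u\in\mathbb{S}^{n-1}$ (orthogonal projection is $1$-Lipschitz and the $(n-1)$-volume of convex bodies is locally Hausdorff-continuous), so this persists after integrating against $dS_u$, and $K\mapsto S(K)$ is continuous on convex bodies --- it suffices to treat bodies with smooth or polytopal boundary and pass to the limit; here $S(K)=\int_{\partial K}dA$ with $dA$ the surface area element.

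First, fix a unit vector $u\in\mathbb{S}^{n-1}$ and let $\nu(x)$ denote the outer unit normal at $x\in\partial K$ (defined $dA$-a.e.). Split $\partial K$ into the sets where $\langle\nu,u\rangle$ is positive, negative, or zero. Because $K$ is convex, every line with direction $u$ meets $K$ in a segment, from which one checks that the orthogonal projection onto $u^{\perp}$ is $dA$-a.e.\ injective on each of $\{\langle\nu,u\rangle>0\}$ and $\{\langle\nu,u\rangle<0\}$, that each of these two pieces projects onto $K|u^{\perp}$ up to a null set, and that the piece $\{\langle\nu,u\rangle=0\}$ contributes nothing to the integral below. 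The Jacobian of this projection restricted to $\partial K$ equals $|\langle\nu(x),u\rangle|$, so by the area formula
\begin{align}\label{eq:projjac}
\mu_{n-1}(K|u^{\perp})=\frac{1}{2}\int_{\partial K}|\langle\nu(x),u\rangle|\,dA(x).
\end{align}

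Next I would integrate \eqref{eq:projjac} over $u\in\mathbb{S}^{n-1}$ and interchange the order of integration (legitimate by Tonelli, the integrand being nonnegative):
\begin{align}
\int_{\mathbb{S}^{n-1}}\mu_{n-1}(K|u^{\perp})\,dS_u=\frac{1}{2}\int_{\partial K}\Big(\int_{\mathbb{S}^{n-1}}|\langle\nu(x),u\rangle|\,dS_u\Big)\,dA(x).
\end{align}
By the $O(n)$-invariance of $dS_u$ the inner integral equals a constant $c_n:=\int_{\mathbb{S}^{n-1}}|\langle w,u\rangle|\,dS_u$ independent of the unit vector $w$, so the right-hand side is $\tfrac{c_n}{2}S(K)$. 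Applying \eqref{eq:projjac} to $K=\mathbb{B}^n$, where $\partial K=\mathbb{S}^{n-1}$, $\nu(x)=x$, and $\mu_{n-1}(\mathbb{B}^n|u^{\perp})=\omega_{n-1}$, gives $\omega_{n-1}=\tfrac{1}{2}\int_{\mathbb{S}^{n-1}}|\langle x,u\rangle|\,dS_x=\tfrac{c_n}{2}$. Hence $\int_{\mathbb{S}^{n-1}}\mu_{n-1}(K|u^{\perp})\,dS_u=\omega_{n-1}S(K)$, which is the claimed formula.

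The main obstacle is making the covering/injectivity statement underlying \eqref{eq:projjac} rigorous: one must verify, using convexity, that the ``illuminated'' and ``shadowed'' faces each project bijectively mod null sets onto $K|u^{\perp}$ and that the area formula applies to the Lipschitz boundary --- this is exactly where convexity is essential, since for a general compact set \eqref{eq:projjac} becomes only the inequality $\leq$. A secondary technical point is the approximation from smooth (or polytopal) bodies, handled by the uniform-in-$u$ Hausdorff continuity noted above. An alternative route, closer to \cite{TV}, is to observe that both sides are translation-invariant continuous valuations homogeneous of degree $n-1$, so Hadwiger's theorem forces each to be a constant multiple of the $(n-1)$-st intrinsic volume, the constant fixed again by the unit ball; this bypasses the projection computation at the cost of heavier machinery.
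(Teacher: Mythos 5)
Your proposal is correct and follows essentially the same route as the classical argument the paper cites from \cite{KR} (the paper states Theorem~\ref{cauchyrn} without proof, recalling only its two ingredients): the projection identity \eqref{projecteda}, averaged over $u\in\mathbb{S}^{n-1}$ with Tonelli, with the resulting constant being exactly the content of Lemma~\ref{KR}. The only minor variations are that you justify \eqref{projecteda} via the area formula and the two-fold covering of the shadow rather than the polygonal limit argument of \cite{KR}, and you pin down the constant $2\omega_{n-1}$ by testing the identity on the unit ball instead of computing the spherical integral directly.
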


The term $\mu_{n-1}(K| u^\perp)$ is the projected area of $K$ onto the subspace in $\mathbb{R}^n$ perpendicular to the vector $u$ and it can be explicitly represented by the integral formula
\begin{align}\label{projecteda}
\mu_{n-1}(K| u^\perp)=\int_{v\in \mathbb{S}^{n-1}} |u\star v|dS_v,
\end{align}
where $v$ is the outward unit normal vector to the surface of $K$, $dS_v$ the surface area element at $v\in \mathbb{S}^{n-1}$, and $u\star v$ is the standard inner product of vectors $u$ and $v$ in $\mathbb{R}^n$. The identity \eqref{projecteda} can be proved by applying the discrete form of the similar identity on an $n$-polygon and taking the limit as the value $n$ goes to infinity. See \cite[p 56]{KR} for the detail. In particular, when $K=\mathbb{S}^{n-1}$, the following lemma shows that the surface area of the orthogonal projection of the standard unit sphere is independent of the choices of the projected direction. This natural property for $\mathbb{S}^{n}$ plays the key role for the proof of Theorem \ref{cauchyrn}. Basically the main result in the present paper (see Theorem \ref{mainthm1}) follows a similar concept from Lemma \ref{KR}.

\begin{lem}[\cite{KR} Lemma 5.5.1]\label{KR}
For any $u\in \mathbb{S}^{n-1}\subset \mathbb{R}^n$,
\begin{align}\label{KR1}
\mu_{n-1}(\mathbb{S}^{n-1}|u^\perp)=\int_{v\in \mathbb{S}^{n-1}} |u\star v| dS_v = 2\ \omega_{n-1},
\end{align}
where $u\star v$ is the standard inner product of vectors $u,v$ in $\mathbb{R}^n$ and $dS_v$ is the surface area element at $v\in \mathbb{S}^{n-1}$.
\end{lem}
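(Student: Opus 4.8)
The plan is to use the rotational symmetry of the sphere to fix the direction $u$, and then to evaluate the remaining integral by slicing $\mathbb{S}^{n-1}$ into parallel $(n-2)$-spheres. First I would observe that the orthogonal group $O(n)$ acts transitively on $\mathbb{S}^{n-1}$ and preserves both the Euclidean inner product and the surface measure $dS_v$; consequently the integral $\int_{v\in\mathbb{S}^{n-1}}|u\star v|\,dS_v$ is independent of $u\in\mathbb{S}^{n-1}$, and I may assume $u=e_n=(0,\dots,0,1)$, so that $u\star v=v_n$ is simply the last coordinate of $v$.

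Next I would introduce the parametrization $v=(\sqrt{1-t^2}\,\omega,\,t)$ with $t\in(-1,1)$ and $\omega\in\mathbb{S}^{n-2}\subset\mathbb{R}^{n-1}$. Computing the induced metric shows that $\partial_t v$ is orthogonal to the tangent directions coming from $\mathbb{S}^{n-2}$, that $|\partial_t v|=(1-t^2)^{-1/2}$, and that the $\omega$-block of the metric equals $(1-t^2)$ times the round metric on $\mathbb{S}^{n-2}$; hence the area element factors as $dS_v=(1-t^2)^{(n-3)/2}\,dt\,dS_\omega$, where $dS_\omega$ is the area element of the unit $(n-2)$-sphere. Therefore
\begin{align*}
\int_{v\in\mathbb{S}^{n-1}}|u\star v|\,dS_v=\Big(\int_{-1}^{1}|t|\,(1-t^2)^{(n-3)/2}\,dt\Big)\Big(\int_{\omega\in\mathbb{S}^{n-2}}dS_\omega\Big).
\end{align*}
The substitution $s=1-t^2$ gives $\int_{-1}^{1}|t|\,(1-t^2)^{(n-3)/2}\,dt=2\int_0^1 t\,(1-t^2)^{(n-3)/2}\,dt=\tfrac{2}{n-1}$, while the second factor is the surface area $|\mathbb{S}^{n-2}|=(n-1)\,\omega_{n-1}$ of the unit $(n-2)$-sphere. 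Multiplying the two factors yields $2\,\omega_{n-1}$, as claimed, with the low-dimensional cases checked directly.

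I do not expect a serious obstacle here: the argument is essentially a bookkeeping of normalizing constants. The only points requiring a little care are the factorization of the surface measure under slicing and the identity $|\mathbb{S}^{n-2}|=(n-1)\,\omega_{n-1}$ relating the area of the unit $(n-2)$-sphere to the volume of the unit $(n-1)$-ball. As a geometric sanity check, one can also note that the orthogonal projection $\pi\colon\mathbb{S}^{n-1}\to u^\perp$ maps each open hemisphere $\{v_n>0\}$ and $\{v_n<0\}$ bijectively onto the open unit ball $\mathbb{B}^{n-1}\subset u^\perp$, so that the ball is covered exactly twice; since the Jacobian of $\pi$ restricted to $\mathbb{S}^{n-1}$ at $v$ equals $|u\star v|$, the area formula gives $\int_{\mathbb{S}^{n-1}}|u\star v|\,dS_v=2\,\mu_{n-1}(\mathbb{B}^{n-1})=2\,\omega_{n-1}$, recovering the same value. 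This geometric picture --- that the projected area of the sphere is the same in every direction --- is exactly the property that will be mirrored by the Pansu spheres in the Heisenberg setting.
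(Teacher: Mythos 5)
Your proof is correct, and the constant bookkeeping checks out: the slicing factorization $dS_v=(1-t^2)^{(n-3)/2}\,dt\,dS_\omega$, the value $\int_{-1}^{1}|t|(1-t^2)^{(n-3)/2}\,dt=\tfrac{2}{n-1}$, and the identity $|\mathbb{S}^{n-2}|=(n-1)\,\omega_{n-1}$ combine to give exactly $2\,\omega_{n-1}$, with $\omega_{n-1}$ the volume of the unit ball in $\mathbb{R}^{n-1}$ as the paper uses it. Note, however, that the paper does not prove this lemma at all: it is quoted from Klain--Rota (Lemma 5.5.1), and the text only supplies the heuristic that the projection of the sphere onto $u^\perp$ is counted twice, once from the ``front'' and once from the ``back,'' which is why the factor $2$ appears. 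Your closing remark --- that the projection maps each open hemisphere bijectively onto the open unit ball in $u^\perp$ with Jacobian $|u\star v|$, so the area formula gives $2\,\mu_{n-1}(\mathbb{B}^{n-1})$ --- is precisely that heuristic made rigorous, and is essentially the argument in the cited source; your explicit slicing computation is an independent, self-contained verification of the same constant. Either route suffices, and the rotational-invariance reduction to $u=e_n$ is legitimate since $O(n)$ preserves both the inner product and the surface measure; the only things worth stating explicitly are the low-dimensional conventions (e.g.\ $n=2$, where $\mathbb{S}^{0}$ consists of two points of total measure $2$), which you flag.
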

Notice that since the projection of the sphere onto the $(n-1)$-dimensional subspace $u^\perp$ is counted twice (from the "front" and the "back" of the subspace $u^\perp$), the coefficient $2$ comes in as shown on the right-hand side of the identity \eqref{KR1}. Roughly speaking, the identities \eqref{projecteda} and \eqref{KR1} can be realized in the following geometric intuition: since the integrand $|u\star v|$ on the right-hand side of those identities are the absolve value of the cosine angle between the vectors $u$ and $v$, the quantity $|u\star v|dS_v$ is the projected area of the infinitesimal area at $v\in \mathbb{S}_{n-1}$ onto the plane $u^\perp$.

We also mention that the reversed statement of Lemma \ref{KR} is not true in general. To be precise, a convex body in $\mathbb{R}^n$ with equal projected areas along any direction is \textit{not} necessarily a sphere. For instance, a Reuleaux triangle in $\mathbb{R}^2$ is not a circle but a convex body with constant widths and equal projected areas along any directions.

Here is the key observation in the proofs of Theorem \ref{cauchyrn} and Lemma \ref{KR}. Both proofs are based on the invariant translations in the homogeneous space $\mathbb{R}^n$. Suppose $u(0)$ is the direction that $K$ (or $\mathbb{S}^{n-1}$ for Lemma \ref{KR}) is projected along, by the inner product $u\star v$ in \eqref{KR1} we mean that the value $u(p)\star v(p)$ is taken at the boundary point $p\in\partial K$ of $K$ in the tangent space $T\mathbb{R}^n_p$ of $\mathbb{R}^n$ at $p$ by considering the parallel transport of the vector $u(0)$ at the origin to the outward unit normal $u(p)$ on the boundary $\partial K$. Hence when considering the Euclidean space $\mathbb{R}^n$ as a Lie group with the natural left translation $L_p q:=p+q$ for any points $p$ and $q$ in $\mathbb{R}^n$, we can write
\begin{align}\label{keyob2}
|u(p)\star v(p) |=|L_{p_*}u(0)\star v(p)|,
\end{align}
where $L_{p_*}$ is the pushforward of the left translation $L_p$. In the present work, we will show the analogies of Theorem \ref{cauchyrn} and Lemma \ref{KR} respectively in the approach of the Heisenberg groups, which are regarded as the flat models of pseudohermitian manifolds. The notion for parallel transports in $\mathbb{R}^n$ shall be replaced by the left invariant translations in the Heisenberg groups $\mathbb{H}_n$ for $n\geq 1$. Unlike using the standard unit sphere $\mathbb{S}^{n-1}$ in $\mathbb{R}^n$ to derive the Cauchy's surface area formula (namely, Lemma \ref{KR}), the Pansu spheres (defined in next section, equation \eqref{Pansu}) see to be a more natural model in $\mathbb{H}_n$ for the formula. See Remark \ref{geointerprtation} for a geometric interpretation of this approach. We also mention that the approach of group translations weaken the assumption for the convexity of domains in the Euclidean spaces. Here we only need that the boundary is of class $C^2$ such that the most of p-normal vectors can be defined on the boundary. See the paragraph before Definition \ref{projectdef} below.

Now we give a brief introduction about some background (see next section for more details). The Heisenberg group $\mathbb{H}_n$ for $n\geq 1$ is defined as the Euclidean space $\mathbb{R}^{2n+1}$ with contact structure $\xi$ (also called \textit{distribution}). At each point in $\mathbb{H}_n$ there is a contact plane $\xi$ of dimensional $2n$. Suppose $\Sigma$ is a hypersurface in $\mathbb{H}_n$. The singular set of $\Sigma$ is a set of points of $\Sigma$ at which the tangent plane of $\Sigma$ coincides with the contact plane; otherwise, points on $\Sigma$ are called the nonsingular points. At any nonsingular point, the intersection of the tangent plane $T\Sigma$ and the contact plane $\xi$ is of dimension $2n-1$ and so there is a unique unit vector $N$ (called the \textit{p-normal vector}) contained in $\xi$ and perpendicular to the intersection $T\Sigma\cap \xi$ with respect to the Levi metric. The volume elements and the area elements in $\mathbb{H}_n$ are the usual Euclidean volumes and the \textit{p-areas}, respectively. The p-area, introduced by Cheng-Hwang-Yang \cite{CHY} for $n=1$ and Cheng-Hwang-Malchiodi-Yang \cite{CHMY1} for $n\geq 2$, comes from a variation of the surface $\Sigma$ in the normal direction $f N$ for some suitable function $f$ with compact support on the regular points of $\Sigma$. Such the invariant area measure in $\mathbb{H}_n$ coincides with the three dimensional Hausdorff measure of $\Sigma$, considered in \cite{FS}, \cite{Balogh}, and \cite{Balogh2}. Notice that although the p-normal vectors are not defined at the singular points of $\Sigma$, the p-area element is globally defined on $\Sigma$ and vanishes at the singular points (\cite[p 135]{CHMY1}). Morever, the p-area element can be represented explicitly in a variety of forms, including the differential forms (see Appendix in \cite{CHMY1} for $\mathbb{H}_n$), the local coordinates for graphs \cite[p 261]{CHY}, and recently for parametrized surfaces \cite{H2} by the author. These notions, especially in the framework of geometric measure theory, have been used to study existence or regularity properties of minimizers for the relative perimeter
or extremizers of isoperimetric inequalities (see, e.g., \cite{RR, RR1, DGN, GN, LM, LR, MR, HR, Monti}).

Recall that \cite{Pansu} the Pansu spheres $\mathcal{P}_\lambda^n$ in $\mathbb{H}_n$ can be defined by rotating a (helix) geodesic joining the points $(0,\cdots, 0, \pm \frac{\pi}{4\lambda^2})\in\mathbb{R}^{2n+1}$ (the "north" and "south" poles respectively) about the $x_{2n+1}$-axis such that its "equator" is a standard sphere $\mathbb{S}^{2n-1}_{1/\lambda}$ in $\mathbb{R}^{2n}$ centered at the origin with radius $\frac{1}{\lambda}$. It is rotationally symmetric and topologically equivalent to the standard unit sphere $\mathbb{S}^{2n}$ and its parametrization can be exactly expressed as in \eqref{Pansu}. Monti in \cite{Monti2} analyzed the symmetrization of $\mathcal{P}^n_\lambda$, and Cheng et al. \cite{CCHY} used a notion of umbilicity to characterize the Pansu spheres in $\mathbb{H}_n$ for $n\geq 2$. There are only two singular points occurred at the north and the south poles of $\mathcal{P}_\lambda^n$, and so the p-normal vectors are well-defined globally on $\mathcal{P}_\lambda^n$ except for the poles. In Theorem \ref{mainthm1} and Proposition \ref{keyprop} we will show that the Pansu spheres in $\mathbb{H}_n$ play a natural analogy as the standard unit spheres to the Euclidean spaces as shown in Theorem \ref{cauchyrn} and Lemma \ref{KR}.

We make a remark for the requirements of the hypersurfaces we concerned. By Theorem B in \cite{CHMY1}, the authors assume that the surfaces $\Sigma$ in $\mathbb{H}_1$ are of class $C^2$-regularity with bounded p-mean curvatures and have the result that the set of all singular points in $\Sigma$ consists of only isolated points and smooth curves. In both cases, the measure of the singular set is zero in the sense of p-area elements, and so this set does not influence the results of the integrals for the p-normal vectors even though the p-normals are not defined on the singular sets (for example, the right-hand sides of \eqref{def1} and \eqref{anydirection2}). Therefore, to our purpose, whenever in the article we say "\textit{the normal vectors are defined on the hypersurface $\Sigma$ a.e.}", it means that the normal vectors are globally defined on $\Sigma$ except possibly for the set of measure zero in the sense of p-area elements. Besides, the inner product of vectors in $\mathbb{H}_n$ is always adopted with respect to the Levi metric and denote $\tilde{N}(p)$ (resp. $N(p)$) by the unit p-normal vector at $p$ on the Pansue sphere $\mathcal{P}^n_\lambda$ (resp. any hypersurface $\Sigma$) in $\mathbb{H}_n$.

The following definition was motivated by Lemma \ref{KR} and the identity \eqref{keyob2} for compact hypersurfaces in the Heisenberg groups $\mathbb{H}_n$.
\begin{defn}\label{projectdef}
Let $\Sigma$ be any compact hypersurface in the Heisenberg group $\mathbb{H}_n$, $n\geq 1$, with the p-normal vectors defined a.e on $\Sigma$. Given a unit p-normal vector $\tilde{N}(p)$ at $p$ on the Pansu sphere $\mathcal{P}_\lambda^n$. The projected p-area of $\Sigma$ onto the orthogonal complement $\tilde{N}(p)^\perp:=\{u\in T_p\mathbb{H}_n;  u\cdot \tilde{N}(p)=0 \}$ is defined by
\begin{align}\label{def1}
\mathcal{A}(\Sigma|\tilde{N}^\perp (p))=\int_{q\in \Sigma} |L_{qp^{-1}*}\tilde{N}(p)\cdot N(q)| d\Sigma_q,
\end{align}
where $L_{qp^{-1}*}$ is the pushforward of the left translation $L_{qp^{-1}}:=L_q \circ L_{p^{-1}}$ in $\mathbb{H}_n$ and $N(q)$ the unit p-normal vector of $\Sigma$ at $q\in \Sigma$.
\end{defn}

In Definition \ref{projectdef} we only consider the projected p-areas of the surfaces $\Sigma$ onto the p-normal vectors of the Pansu spheres $\mathcal{P}_\lambda^n$. There are two reasons: first, by applying the pushforward $L_{{p^{-1}}_*}$ of the left translation $L_{p^{-1}}$ on the unit p-normal vector $\tilde{N}(p)$ of $\mathcal{P}_\lambda^n$, it is clear that the set $\{L_{{p^{-1}}_*}\tilde{N}(p)|\text{ all } p\in \mathcal{P}_\lambda^n\}= \mathbb{S}^{2n-1}\subset \mathbb{R}^{2n}\subset \mathbb{H}_n$, and hence the directions that the projection is along with are not full of all possible positions in $\mathbb{H}_n$ (namely, $\mathbb{S}^{2n}$). This raises a question of considering the projected p-areas along \textit{any} directions, namely, along any vector $u\in \mathbb{S}^{2n}$. However, the following Proposition \ref{anydirection} shows that for $n=1$ when $\Sigma$ is a rotationally symmetric surface and we consider the projections of $\Sigma$ along arbitrary directions $u$ in $\mathbb{H}_1$, the projected p-areas of $\Sigma$ depend on the choices of the projected directions. As a result, the projected p-areas of both the Pansu spheres and the standard unit sphere $\mathbb{S}^{2}$ will \textit{not} be the constants if the projected directions are in $\mathbb{S}^{2n}$. The observation suggests that using the p-normal vectors of the Pansu spheres as the projected vectors would be better than that of the usual Euclidean normal vectors. Secondly, the projected p-areas of the Pansu spheres along its p-normal vectors is a constant, independent of the choices of the projected directions and the value can be exactly solved (Proposition \ref{keyprop}), but that of the spheres $\mathbb{S}^{2n}$ involve an integral which does not seem to have the closed-form \eqref{integralbounded}. Therefore, the definition of considering the p-normal vectors of the Pansu spheres is a natural generalization of projected p-areas in the Heisenberg groups.

The following result shows that the projected p-areas of any rotationally symmetric compact surface depend on the projected directions if we consider \textit{arbitrary} directions in $\mathbb{S}^{2}\subset \mathbb{H}_1$.

\begin{prop}\label{anydirection}
Let $\Sigma$ be any  rotationally symmetric compact hypersurface $\Sigma$ in the Heisenberg group $\mathbb{H}_1$ with the p-normal vectors defined a.e on $\Sigma$. Suppose $\Sigma=\Sigma^+\cup \Sigma^-$ can be represented by
$$\Sigma^\pm:(r,\theta)\rightarrow (r\cos\theta, r\sin\theta,h^\pm(r)),$$
for some functions $h^+\geq 0$, $h^-\leq 0$, and
$0\leq \theta \leq 2\pi$, $0\leq r\leq R$ for some positive number $R$.
If $h^+$ and $h^-$ satisfy any of the following conditions:
\begin{enumerate}
\item  $\frac{d h^+}{d r}$ or $\frac{d h^-}{d r}$ is continuous on the interval $[0,R]$,
\item  $|\frac{d h^+}{d r}|$ or $|\frac{d h^-}{d r}|$ is bounded by $\frac{r}{\sqrt{R^2-r^2}}$ on the interval $[0,R)$.
\end{enumerate}
Then for any vector $u(0):=(\sin\alpha\cos\beta, \sin\alpha\sin\beta, \cos\alpha)$ in $\mathbb{S}^2$, $0\leq \alpha \leq \pi, 0\leq \beta \leq 2\pi$, the projected p-area of $\Sigma$ onto the subspace $u(0)^\perp$ perpendicular to $u(0)$ is given by
\begin{align}\label{anydirection2}
\mathcal{A}(\Sigma|u(0)^\perp)=\int_{q\in \Sigma} |L_{q_*}u(0) \cdot N(q)| d\Sigma_q = |\sin\alpha|C,
\end{align}where $N(q)$ is the unit p-normal vector at $q\in \Sigma$ and $C$ is a constant independent of the choices of the vector $u(0)$. In particular, when $\alpha=\frac{\pi}{2}$, the identity \eqref{anydirection2} becomes to $\eqref{rotapconstant}$ for $n=1$.
\end{prop}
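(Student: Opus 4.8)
Here is a plan of proof; I leave the routine computations to the reader.

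The plan is to express the integrand of \eqref{anydirection2} in the standard left-invariant orthonormal frame $\{e_1,e_2,T\}$ of $\mathbb{H}_1$ (with $e_1,e_2$ spanning $\xi$ and $T$ the Reeb field), to exploit the rotational symmetry of $\Sigma$ so as to write the angle of the p-normal $N(q)$ as $\theta$ plus a function of $r$ alone, and then to observe that the resulting $\theta$-integral of $|\cos(\theta+\mathrm{const})|$ over a full period does not see the constant. For the first point, note that at the origin the frame $\{e_1,e_2,T\}$ coincides with $\{\partial_{x_1},\partial_{x_2},\partial_{x_3}\}$, so $u(0)=\sin\alpha\cos\beta\,e_1+\sin\alpha\sin\beta\,e_2+\cos\alpha\,T$ there; since a left translation carries each left-invariant field to itself — in particular $T$ is central, hence left-invariant — we get
\[
L_{q_*}u(0)=\sin\alpha\cos\beta\,e_1|_q+\sin\alpha\sin\beta\,e_2|_q+\cos\alpha\,T|_q\qquad\text{for every }q\in\mathbb{H}_1 .
\]

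Next I would compute $N(q)$ for $q\in\Sigma^{\pm}$ with cylindrical coordinates $(r,\theta)$ from the defining function $f=x_3-h^{\pm}\!\big(\sqrt{x_1^2+x_2^2}\big)$, by writing out its horizontal gradient $(e_1 f)e_1+(e_2 f)e_2$. A direct computation shows that along $\Sigma^{\pm}$ the pair $(e_1 f,\,e_2 f)$ equals $\rho^{\pm}(r)\big(\cos(\theta+\gamma^{\pm}(r)),\ \sin(\theta+\gamma^{\pm}(r))\big)$, where $\rho^{\pm}(r)=\sqrt{((h^{\pm})'(r))^{2}+c\,r^{2}}$ (with $c>0$ a structural constant) is positive for $r>0$ and $\gamma^{\pm}(r)$ depends on $r$ alone — the $\theta$-dependence enters only through an overall rotation, which is precisely the algebraic imprint of the rotational symmetry of $\Sigma$. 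Hence $N(q)=\cos\phi^{\pm}\,e_1+\sin\phi^{\pm}\,e_2$ with $\phi^{\pm}=\theta+\gamma^{\pm}(r)$, and, since $N(q)\perp T$,
\[
L_{q_*}u(0)\cdot N(q)=\sin\alpha\,\cos\!\big(\theta+\gamma^{\pm}(r)-\beta\big).
\]
The same computation, combined with the standard formula for the p-area element of a graph $x_3=h^{\pm}(r)$ (see \cite{H2}), yields $d\Sigma_q=r\,\rho^{\pm}(r)\,dr\,d\theta$ on $\Sigma^{\pm}$.

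It remains to integrate. Splitting $\Sigma=\Sigma^{+}\cup\Sigma^{-}$ and carrying out the $\theta$-integration first gives
\[
\mathcal{A}(\Sigma|u(0)^{\perp})=|\sin\alpha|\sum_{\pm}\int_0^{R}\Big(\int_0^{2\pi}\big|\cos(\theta+\gamma^{\pm}(r)-\beta)\big|\,d\theta\Big)\,r\,\rho^{\pm}(r)\,dr .
\]
Because $|\cos|$ has period $\pi$, the inner integral equals $\int_0^{2\pi}|\cos\theta|\,d\theta=4$ regardless of $r$, $\gamma^{\pm}(r)$ and $\beta$; therefore $\mathcal{A}(\Sigma|u(0)^{\perp})=|\sin\alpha|\,C$ with $C=4\sum_{\pm}\int_0^{R}r\,\rho^{\pm}(r)\,dr$, which is independent of $u(0)$. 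Each of $h^{+},h^{-}$ satisfying hypothesis (1) (so that $(h^{\pm})'$ is bounded) or hypothesis (2) (so that $r\rho^{\pm}(r)\le r^{2}\big(\sqrt c+(R^2-r^2)^{-1/2}\big)$, which is integrable on $[0,R]$) guarantees $C<\infty$. Taking $\alpha=\pi/2$ collapses \eqref{anydirection2} to $\mathcal{A}(\Sigma|u(0)^{\perp})=C$, i.e. \eqref{rotapconstant} with $n=1$.

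The only genuinely non-routine step is the p-normal computation above — extracting the rotation structure $(e_1 f,\,e_2 f)=\rho^{\pm}(r)\big(\cos(\theta+\gamma^{\pm}),\sin(\theta+\gamma^{\pm})\big)$; once this is in hand the rest reduces to the elementary periodicity of $|\cos|$. A little care is also needed for the degenerate loci, but they are harmless: the singular set of a rotationally symmetric $\Sigma$ lies on the axis $\{r=0\}$, where $d\Sigma_q$ vanishes, while the rim $\{r=R\}$ where $\Sigma^{+}$ meets $\Sigma^{-}$ (along a vertical tangent when (2) holds, or along an edge otherwise) has zero p-area; neither contributes to the integral, so the standing hypothesis that the p-normal be defined a.e. on $\Sigma$ is in fact automatic in this setting.
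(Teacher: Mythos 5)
Your proposal is correct and follows essentially the same route as the paper's proof: push $u(0)$ forward in the left-invariant frame (only the horizontal part matters), compute the p-normal of the rotationally symmetric graph so that the $\theta$-dependence appears only as a phase shift, integrate $|\cos|$ over a period to get a factor independent of $\beta$ and $r$, and verify finiteness of the radial integral under hypotheses (1) or (2). The stated outcome of your ``routine'' normal computation is indeed what the paper obtains (with your structural constant $c=1$ for the normalization $\mathring{e}_{x}=\partial_x+y\partial_z$, $\mathring{e}_{y}=\partial_y-x\partial_z$), so no gap remains.
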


Notice that in Proposition \ref{anydirection} the sharp case occurs, namely $h_r=\frac{r}{\sqrt{R^2-r^2}}$, if and only if the compact hypersurface $\Sigma$ is the standard sphere $\mathbb{S}^2(R)$ with radius $R$ in $\mathbb{H}_1$. It is similar for the case in Theorem \ref{rotationalpop} when $\Sigma=\mathbb{S}^{2n}(R)$, the standard sphere with radius $R$ in $\mathbb{H}_{n}$.

Our main result is an analogy of Theorem \ref{cauchyrn} in the Heisenberg groups, which shows that the p-area $\mathcal{A}(\Sigma)$ of any compact convex surface $\Sigma\subset \mathbb{H}_n$ is the average of the projected p-areas onto the orthogonal complements of all unit p-normal vectors in $\mathcal{P}^n_\lambda$ over the volume of the $(2n-1)$-dimensional Euclidean sphere.

\begin{Thm}\label{mainthm1}
Given any compact hypersurface $\Sigma$ in the Heisenberg group $\mathbb{H}_n$, $n\geq 1$, with the p-normal vectors defined a.e on $\Sigma$. Let $\tilde{N}(p)$ be a unit p-normal vector at $p$ on the Pansu sphere $\mathcal{P}_\lambda^n$ and $\mathcal{A}(\Sigma | \tilde{N}(p)^\perp)$ the projected p-area of $\Sigma$ onto the orthogonal complement $\tilde{N}(p)^\perp$. Then the p-area of $\Sigma$ is given by
\begin{align}\label{formula1}
\mathcal{A}(\Sigma)=\frac{1}{2 C_n \omega_{2n-1}} \int_{p\in \mathcal{P}_\lambda^n} \mathcal{A}(\Sigma | \tilde{N}(p)^\perp) d\Sigma_p,
\end{align}
where $C_n=\frac{\sqrt{\pi}}{\lambda^{2n+1}}\frac{\Gamma(n+\frac{1}{2})}{\Gamma(n+1)}$ is a dimensional constant, $\Gamma(x)$ the Gamma function, and $\omega_{2n-1}$ the volume of the $(2n-1)$-dimensional Euclidean sphere. Moreover, the p-area of the surface $\Sigma$ can be represented by the average projected p-areas of $\Sigma$ along all p-normal vectors of the Pansu sphere, namely,
\begin{align}\label{formula2}
\mathcal{A}(\Sigma)=\frac{S_{2n-1}}{2 \omega_{2n-1}\mathcal{A}(\mathcal{P}^n_\lambda)}\int_{p\in \mathcal{P}^n_\lambda} \mathcal{A}(\Sigma| \tilde{N}(p)^\perp) d\Sigma_p,
\end{align}where $S_{2n-1}$ is the (Euclidean) surface area of the $(2n-1)$-sphere in $\mathbb{R}^{2n}$.
\end{Thm}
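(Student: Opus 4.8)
The plan is to follow the proof of the Euclidean Cauchy formula (Theorem \ref{cauchyrn}): unwind Definition \ref{projectdef}, interchange the order of integration, and reduce \eqref{formula1} to showing that an inner integral — the Heisenberg counterpart of the quantity computed in Lemma \ref{KR} — is a constant. Inserting \eqref{def1} into the right-hand side of \eqref{formula1} produces
\begin{align*}
\frac{1}{2C_n\omega_{2n-1}}\int_{p\in\mathcal{P}^n_\lambda}\int_{q\in\Sigma}\bigl|L_{qp^{-1}*}\tilde N(p)\cdot N(q)\bigr|\,d\Sigma_q\,d\Sigma_p .
\end{align*}
Since $L_{qp^{-1}*}$ restricts to a linear isometry between the contact planes (the horizontal distribution and the Levi metric being left-invariant), the integrand is bounded by $1$; as $\mathcal{P}^n_\lambda$ and $\Sigma$ are compact and the p-normals are defined a.e.\ with the p-area element vanishing on the singular sets, the double integral is finite and Tonelli's theorem permits swapping the integrations. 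Hence the right-hand side of \eqref{formula1} equals $\tfrac{1}{2C_n\omega_{2n-1}}\int_{q\in\Sigma}I(q)\,d\Sigma_q$ with $I(q):=\int_{p\in\mathcal{P}^n_\lambda}|L_{qp^{-1}*}\tilde N(p)\cdot N(q)|\,d\Sigma_p$, and everything reduces to proving $I(q)=2C_n\omega_{2n-1}$ for every nonsingular $q\in\Sigma$, whereupon the integral collapses to $\mathcal{A}(\Sigma)$.

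The constancy of $I(q)$ is the Heisenberg analogue of Lemma \ref{KR} and is what we record as Proposition \ref{keyprop}. To establish it I would use left-invariance to write $L_{qp^{-1}*}\tilde N(p)=L_{q*}\bigl(L_{p^{-1}*}\tilde N(p)\bigr)$ and set $u(p):=L_{p^{-1}*}\tilde N(p)$, $w(q):=L_{q^{-1}*}N(q)$, which are unit vectors of the contact plane $\xi_e\cong\mathbb{R}^{2n}$ at the identity; the isometry property gives $L_{qp^{-1}*}\tilde N(p)\cdot N(q)=u(p)\cdot w(q)$, the Euclidean inner product in $\mathbb{R}^{2n}$, so $I(q)=\int_{\mathbb{S}^{2n-1}}|u\cdot w(q)|\,d\mu(u)$, where $\mu$ is the pushforward of the p-area measure of $\mathcal{P}^n_\lambda$ under the horizontal Gauss-type map $p\mapsto u(p)$ (whose image is $\mathbb{S}^{2n-1}$). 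Since $U(n)$ acts on $\mathbb{H}_n$ by contact automorphisms preserving the Levi metric and fixing $\mathcal{P}^n_\lambda$, the automorphism identity $\phi_A\circ L_g=L_{\phi_A(g)}\circ\phi_A$ makes $p\mapsto u(p)$ equivariant for the $U(n)$-actions, so $\mu$ is $U(n)$-invariant; as $U(n)$ acts transitively on $\mathbb{S}^{2n-1}$, $\mu$ is a constant multiple of the round measure, and comparison of total masses gives $\mu=\tfrac{\mathcal{A}(\mathcal{P}^n_\lambda)}{S_{2n-1}}\,dS$. Plugging this into Lemma \ref{KR} with $n$ replaced by $2n$ yields $I(q)=\tfrac{\mathcal{A}(\mathcal{P}^n_\lambda)}{S_{2n-1}}\int_{\mathbb{S}^{2n-1}}|u\cdot w(q)|\,dS_u=\tfrac{2\omega_{2n-1}}{S_{2n-1}}\mathcal{A}(\mathcal{P}^n_\lambda)$, independent of $q$; a direct computation of $\mathcal{A}(\mathcal{P}^n_\lambda)$ from \eqref{Pansu}, simplified with Gamma-function identities, shows $\mathcal{A}(\mathcal{P}^n_\lambda)=C_n S_{2n-1}$, so $I(q)=2C_n\omega_{2n-1}$ and \eqref{formula1} follows.

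Formula \eqref{formula2} is then immediate, since the identity $C_n=\mathcal{A}(\mathcal{P}^n_\lambda)/S_{2n-1}$ just obtained rewrites $\tfrac{1}{2C_n\omega_{2n-1}}$ as $\tfrac{S_{2n-1}}{2\omega_{2n-1}\mathcal{A}(\mathcal{P}^n_\lambda)}$. The main obstacle is the constancy of $I(q)$, i.e.\ Proposition \ref{keyprop}: one must set up the horizontal Gauss map and the $U(n)$-equivariance with care (the map is a submersion onto $\mathbb{S}^{2n-1}$, not a bijection, so the pushforward-measure argument together with symmetry, rather than a naive change of variables, is the right tool), and one must carry out the explicit p-area computation for the Pansu sphere that fixes the normalizing constant $C_n$.
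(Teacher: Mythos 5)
Your proposal is correct, and its outer skeleton is the same as the paper's: unwind Definition \ref{projectdef}, interchange the two integrations (the paper does this without comment; your boundedness-plus-Tonelli justification is fine), reduce to the constancy of the inner integral, and then obtain \eqref{formula2} from \eqref{formula1} via $\mathcal{A}(\mathcal{P}^n_\lambda)=C_nS_{2n-1}$, which is exactly the computation \eqref{pansuarea} already in the paper. Where you genuinely diverge is in how the constancy of the inner integral is proved. The paper first uses Lemma \ref{surjective} to write $L_{q^{-1}*}N(q)=L_{q'^{-1}*}\tilde N(q')$ for some $q'\in\mathcal{P}^n_\lambda$, so that the inner integral becomes the quantity of Proposition \ref{keyprop} (up to a tacit relabeling of the roles of $p$ and $q'$, legitimate because $|L_{p^{-1}*}\tilde N(p)\cdot L_{q'^{-1}*}\tilde N(q')|$ is symmetric in the two points), and then evaluates that quantity by a bare-hands computation: the explicit normal \eqref{pnormal2} brings the integrand to the form \eqref{npnqdp}, the identity \eqref{key} $A^2+B^2=\bar r^2/(1-\lambda^2\bar r^2)$ shows that the dependence on the fixed point enters only through a phase shift, and the angular integral is then handled by Lemma \ref{KR}. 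You instead push the p-area measure of $\mathcal{P}^n_\lambda$ forward under the horizontal Gauss map $p\mapsto L_{p^{-1}*}\tilde N(p)$, use $U(n)$-equivariance and the uniqueness of the invariant measure on the homogeneous space $\mathbb{S}^{2n-1}$ to identify the pushforward with $\tfrac{\mathcal{A}(\mathcal{P}^n_\lambda)}{S_{2n-1}}\,dS$, and then apply Lemma \ref{KR} directly to the arbitrary unit horizontal vector $w(q)=L_{q^{-1}*}N(q)$; this makes Lemma \ref{surjective} and the $p\leftrightarrow q$ relabeling unnecessary, isolates the only features of the Pansu sphere actually used (rotational symmetry and total p-area), and in fact generalizes at once to the setting of Theorem \ref{rotationalpop}. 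The price is that you must verify with care that the $U(n)$-action consists of pseudohermitian transformations preserving $\mathcal{P}^n_\lambda$, its p-area form and the chosen p-normal (the sign ambiguity is harmless under the absolute value), and invoke uniqueness of invariant measures, whereas the paper's longer explicit computation is self-contained and produces the constant $2C_n\omega_{2n-1}$ directly.
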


The proof of Theorem \ref{mainthm1} is based on the following observation that the projected p-area of the Pansu sphere $\mathcal{P}_\lambda^n$ in $\mathbb{H}_n$ along its p-normal vectors is a constant; it is an analogous property of Lemma \ref{KR} for spheres in $\mathbb{R}^n$.

\begin{prop}\label{keyprop}
Given the Pansu sphere $\mathcal{P}^n_\lambda$ in $\mathbb{H}_n$ for $n\geq 1$ defined by \eqref{Pansu} and any p-normal vector $\tilde{N}(p)$ for some fixed $p\in \mathcal{P}^n_\lambda$. Denote $\tilde{N}(q)$ and $L_{qp^{-1}*}\tilde{N(}p)$ respectively by the unit p-normal vector at $q\in \mathcal{P}^n_\lambda$ and the pushforward of the left translation $L_{qp^{-1}}$ operated on $\tilde{N}(p)$. Then the projected p-area of the Pansu sphere along $\tilde{N}(p)$ is a dimensional constant, namely,
\begin{align}\label{pconstant}
\mathcal{A}(\mathcal{P}_\lambda^n|\tilde{N}(p)^\perp)=\int_{q\in \mathcal{P}^n_\lambda} |L_{qp^{-1}*}\tilde{N}(p) \cdot \tilde{N}(q)| d\Sigma_q = 2 C_n\omega_{2n-1},
\end{align}where $C_n$ is the constant defined as the one in Theorem \ref{mainthm1}.
\end{prop}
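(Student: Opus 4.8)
The plan is to compute the integral in \eqref{pconstant} directly using the explicit rotational parametrization \eqref{Pansu} of $\mathcal{P}^n_\lambda$, exploiting the rotational symmetry to reduce the $(2n+1)$-dimensional computation essentially to a one-variable integral. First I would fix $p$; by rotational symmetry about the $x_{2n+1}$-axis and the left-invariance built into the integrand, I may assume $p$ lies on the equator, say $p$ corresponds to the parameter value where $\tilde N(p)$ points in a standard coordinate direction in $\mathbb{R}^{2n}\subset \mathbb{H}_n$. The key structural fact, already flagged in the discussion after Definition \ref{projectdef}, is that $\{L_{p^{-1}*}\tilde N(p) : p\in\mathcal P^n_\lambda\}=\mathbb S^{2n-1}\subset\mathbb R^{2n}$, so $L_{qp^{-1}*}\tilde N(p)=L_{q*}\big(L_{p^{-1}*}\tilde N(p)\big)$ where $L_{p^{-1}*}\tilde N(p)=u_0$ is a fixed unit horizontal vector independent of where we sit on the sphere. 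Thus \eqref{pconstant} becomes $\int_{q\in\mathcal P^n_\lambda}|L_{q*}u_0\cdot \tilde N(q)|\,d\Sigma_q$, which is exactly the projected $p$-area of $\mathcal P^n_\lambda$ along the single fixed direction $u_0\in\mathbb S^{2n-1}$.

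Next I would write everything in the rotational parameters $(r,\phi)\in[0,1/\lambda]\times(\text{sphere directions})$ coming from \eqref{Pansu}, using the known expressions for the $p$-normal $\tilde N(q)$ on a rotationally symmetric surface (these are recorded in the references \cite{CHY}, \cite{H2}, and will reappear when Proposition \ref{anydirection} and Theorem \ref{rotationalpop} are proved). The left translation $L_{q*}$ in $\mathbb H_n$ acts on the horizontal frame by a computable linear map (it fixes the horizontal directions up to the standard Heisenberg shear in the $x_{2n+1}$ direction), so $L_{q*}u_0\cdot\tilde N(q)$ reduces to a dot product of explicit trigonometric/radical expressions. After integrating out the angular variables — here the inner integral over the $(2n-1)$-sphere of directions produces a factor involving $\omega_{2n-1}$ and an elementary average of $|\cos|$ against the sphere, analogous to the Euclidean computation in Lemma \ref{KR} that yields the factor $2$ — one is left with a single radial integral in $r$. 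That radial integral should evaluate, via a Beta-function substitution $r=\tfrac1\lambda\sin t$, to the constant $C_n=\frac{\sqrt\pi}{\lambda^{2n+1}}\frac{\Gamma(n+\frac12)}{\Gamma(n+1)}$, matching the claim. The factor $2$ appears because, exactly as in the Euclidean case, each hyperplane $\tilde N(p)^\perp$ is covered twice — once by the northern cap $\Sigma^+$ and once by the southern cap $\Sigma^-$ of the Pansu sphere — and the two contributions are equal by the symmetry $h^-=-h^+$.

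The main obstacle I anticipate is bookkeeping rather than conceptual: correctly identifying the $p$-normal vector field $\tilde N(q)$ on $\mathcal P^n_\lambda$ in the chosen coordinates and correctly computing the action of $L_{q*}$ on it with respect to the Levi metric, since the Heisenberg shear mixes the horizontal and vertical directions and one must verify that the relevant dot product really only involves horizontal components (so that it matches the Euclidean-style $|u_0\cdot(\cdot)|$ structure). Once that linear-algebra step is pinned down, the remaining work is the angular integration (a routine $|\cos|$-average over $\mathbb S^{2n-1}$) and the radial integration (a routine Beta integral), and verifying that the product of the resulting constants is precisely $2C_n\omega_{2n-1}$. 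I would also double-check the degenerate endpoint $r\to 1/\lambda$ (the equator) and $r\to 0$ (the poles, where $\tilde N$ is undefined on a measure-zero set) to confirm the integral converges, which it does since the $p$-area element vanishes at the singular points.
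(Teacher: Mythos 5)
Your proposal is correct and follows essentially the same route as the paper's proof: pull $\tilde{N}(p)$ back to a fixed horizontal unit vector $u_0=L_{{p^{-1}}_*}\tilde{N}(p)\in\mathbb{S}^{2n-1}$, write the p-normal and p-area element of $\mathcal{P}^n_\lambda$ in the rotational coordinates of \eqref{Pansu}, reduce the angular integral to the $|\cos|$-average of Lemma \ref{KR} (giving $2\omega_{2n-1}$), and finish with the radial Beta-type integral. One bookkeeping correction: the radial integral $\int_0^{1/\lambda}\bar{r}^{2n}(1-\lambda^2\bar{r}^2)^{-1/2}\,d\bar{r}$ equals $C_n/2$, so the final constant arises as (two hemispheres) $\times\,2\omega_{2n-1}\times C_n/2=2C_n\omega_{2n-1}$, rather than the factor $2$ coming solely from the two caps as you describe.
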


In fact, Proposition \ref{keyprop} can be generalized to any rotationally symmetric surfaces about the $x_{2n+1}$-axis in $\mathbb{H}_n$. This also shows that the reversed statement of Proposition \ref{keyprop} is \textit{not} true in general, which is the same result as the counterexamples of constant widths (e.g. Reuleaux triangles mentioned in the previous paragraphs) in the Euclidean spaces.

\begin{Thm}\label{rotationalpop}
Let $\Sigma=\Sigma^+\cup \Sigma^-$ be a rotationally symmetric compact hypersurface in $\mathbb{H}_n$ obtained by rotating a hypersurface in $\mathbb{R}^{2n}$ about the $x_{2n+1}$-axis, where $\Sigma^+=\{x_{2n+1}=h^+\geq 0\}$ and $\Sigma^-=\{x_{2n+1}=h^-\geq 0\}$ for some functions $h^\pm=h^\pm(r)$ defined on $[0,R]$ for some $R>0$. If $h^+$ and $h^-$ satisfy any of the following conditions
\begin{enumerate}
\item  $\frac{d h^+}{d r}$ or $\frac{d h^-}{d r}$ is continuous on the interval $[0,R]$,
\item  $|\frac{d h^+}{d r}|$ or $|\frac{d h^-}{d r}|$ is bounded by $\frac{r}{\sqrt{R^2-r^2}}$ on the interval $[0,R)$.
\end{enumerate}
and denote $\tilde{N}(p)$ by any unit p-normal vector at $p$ in the Pansu sphere $\mathcal{P}_\lambda^n$ defined by \eqref{Pansu}, then the projected p-area of $\Sigma$ onto $\tilde{N}(p)^\perp$ is given by
\begin{align}\label{rotapconstant}
\mathcal{A}(\Sigma|\tilde{N}(p)^\perp)=\int_{q\in \Sigma} |L_{qp^{-1}*}\tilde{N}(p) \cdot N(q)| d\Sigma_q = C,
\end{align}where $N(q)$ is the unit p-normal vector at $q\in \Sigma$ and $C$ is a constant independent of the choices of $\tilde{N}(p)$. In particular, if $\Sigma=\mathcal{P}^n_\lambda$, then the projected p-area is exactly same as the one in Proposition \ref{keyprop}.
\end{Thm}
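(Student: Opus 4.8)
The plan is to peel the left translation in \eqref{rotapconstant} back to one based at the origin, and then to remove the dependence on the projected direction by exploiting the rotational invariance of $\Sigma$. Since $L_{qp^{-1}}=L_q\circ L_{p^{-1}}$, we have $L_{qp^{-1}*}\tilde N(p)=L_{q*}\bigl(L_{p^{-1}*}\tilde N(p)\bigr)$, and because left translations are isometries of the Levi metric preserving the contact distribution $\xi$, the vector $u:=L_{p^{-1}*}\tilde N(p)$ is a unit horizontal vector at the origin, which we identify with a point of $\mathbb S^{2n-1}\subset\mathbb R^{2n}$; moreover, as $p$ runs over $\mathcal P^n_\lambda$ the vector $u$ sweeps out all of $\mathbb S^{2n-1}$ (the discussion after Definition \ref{projectdef}). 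Thus \eqref{rotapconstant} is equivalent to showing that the quantity
\[
F(u):=\int_{q\in\Sigma}|L_{q*}u\cdot N(q)|\,d\Sigma_q
\]
is independent of $u\in\mathbb S^{2n-1}$; its value is then the constant $C$.

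For that independence I would argue by symmetry. A rotation $\Phi$ of $\mathbb H_n=\mathbb C^n\times\mathbb R$ about the $x_{2n+1}$-axis has the form $\Phi(z,t)=(\rho z,t)$ with $\rho\in U(n)$; since $U(n)$ preserves both the Hermitian structure entering the group law and the Levi metric, $\Phi$ is a Lie-group automorphism of $\mathbb H_n$ and an isometry of its pseudohermitian structure. Hence $\Phi$ preserves $\xi$ and the p-area element, it maps $\Sigma$ onto itself by the rotational symmetry, so $N(\Phi q)=\pm\Phi_*N(q)$ on the regular set (the sign being immaterial below), and $\Phi\circ L_q=L_{\Phi(q)}\circ\Phi$ yields $L_{\Phi(q)*}=\Phi_*\circ L_{q*}\circ\Phi_*^{-1}$. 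Substituting $q\mapsto\Phi(q)$ in the integral defining $F(u)$ and using that $\Phi_*$ is a Levi isometry with $\Phi_*|_{\xi_e}=\rho$, every $\Phi_*$ cancels and one gets $F(u)=F(\rho^{-1}u)$ for all $\rho\in U(n)$. Because $U(n)$ acts transitively on $\mathbb S^{2n-1}$, $F$ is constant there, which is \eqref{rotapconstant}; and when $\Sigma=\mathcal P^n_\lambda$ this common value is the one already found in Proposition \ref{keyprop}, namely $2C_n\omega_{2n-1}$.

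The same conclusion can be reached by a direct computation that in addition yields $C$ explicitly: writing $\Sigma^\pm$ as graphs $\{x_{2n+1}=h^\pm(r)\}$ and invoking the graph formula for the p-area element from \cite{CHY} and \cite{H2}, a short computation gives $d\Sigma=\sqrt{(h^\pm_r)^2+r^2}\,dV$ on the ball $\{|z|\le R\}\subset\mathbb R^{2n}$, while $N$ equals the horizontal gradient of the defining function normalized by the same factor $\sqrt{(h^\pm_r)^2+r^2}$. That factor cancels, so $F(u)$ reduces to a Euclidean integral over $\{|z|\le R\}$ with integrand $\bigl|\,\langle u,Jz\rangle-\tfrac{h^\pm_r}{r}\langle u,z\rangle\,\bigr|$ ($J$ the standard complex structure on $\mathbb R^{2n}=\mathbb C^n$), a quantity that depends on $u$ only through $U(n)$-invariant pairings with $z$; invariance of the integral under $z\mapsto\rho z$ gives the independence of $u$ once more, and for $n=1$ the angular integral can be evaluated to give $C=4\int_0^R r\sqrt{(h^+_r)^2+r^2}\,dr+4\int_0^R r\sqrt{(h^-_r)^2+r^2}\,dr=\tfrac{2}{\pi}\mathcal A(\Sigma)$, matching Proposition \ref{anydirection} at $\alpha=\tfrac{\pi}{2}$.

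The point that needs care — and the only place the hypotheses (1)--(2) intervene — is the finiteness of $F(u)$ and the admissibility of the change of variables near the equatorial sphere $\{r=R\}$, where under (2) the slope $h^\pm_r$ may blow up at the rate $r/\sqrt{R^2-r^2}$ (precisely the rate attained by the standard sphere $\mathbb S^{2n}(R)$), and near the two points of $\Sigma$ on the $x_{2n+1}$-axis together with $\{r=R\}$, which constitute the singular set of a rotationally symmetric $\Sigma$ and have vanishing p-area. Discarding this negligible set, and noting that under either (1) or the bound $|h^\pm_r|\le r/\sqrt{R^2-r^2}$ the function $|h^\pm_r|$ is integrable against $r^{2n-1}\,dr$, one obtains $F(u)<\infty$ and all of the above is justified; for $n=1$ the statement is in any case contained in Proposition \ref{anydirection} with $\alpha=\tfrac{\pi}{2}$.
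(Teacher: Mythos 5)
Your proposal is correct, but its main line of argument is genuinely different from the paper's. The paper proves the theorem by a direct coordinate computation (written out only for $n=1$, with the higher-dimensional case asserted to be analogous): it plugs the explicit Pansu-sphere normal \eqref{pnormal2} and the graph normal \eqref{pnormal1} into the integrand, observes that with $A=\lambda h^+_r\tilde r-r\sqrt{1-\lambda^2\tilde r^2}$ and $B=\lambda r\tilde r+h^+_r\sqrt{1-\lambda^2\tilde r^2}$ one has $A^2+B^2=(h^+_r)^2+r^2$, independent of $\tilde r$ and $\theta$, and then the angular integral of $|\cos(\theta-\phi-\psi)|$ contributes a universal constant, leaving $C\int_0^R r\sqrt{(h^\pm_r)^2+r^2}\,dr$, finite by the same estimate as \eqref{integralbounded}. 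You instead reduce to $F(u)=\int_\Sigma|L_{q*}u\cdot N(q)|\,d\Sigma_q$ with $u=L_{p^{-1}*}\tilde N(p)\in\mathbb S^{2n-1}$ and prove $F$ constant by equivariance under the maps $\Phi(z,t)=(\rho z,t)$, $\rho\in U(n)$: these are indeed pseudohermitian automorphisms of $\mathbb H_n$ (note it is exactly the unitary subgroup, not all of $SO(2n)$, that preserves the group law and Levi metric, and your $\Sigma$, being $SO(2n)$-invariant, is in particular $U(n)$-invariant), and $U(n)$ acts transitively on $\mathbb S^{2n-1}$, so $F(u)=F(\rho^{-1}u)$ gives constancy; the hypotheses (1)--(2) are used, as you say, only to guarantee finiteness via $F(u)\le\mathcal A(\Sigma)<\infty$. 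This symmetry argument is coordinate-free and works uniformly in $n$, which is a real advantage over the paper's $n=1$ computation, though it does not by itself produce the value of $C$; your second, computational route (which reduces to the Euclidean integrand $\bigl|\langle u,Jz\rangle-\tfrac{h^\pm_r}{r}\langle u,z\rangle\bigr|$ and, for $n=1$, gives $C=\tfrac{2}{\pi}\mathcal A(\Sigma)$, consistent with Propositions \ref{anydirection} and \ref{keyprop}) is essentially the paper's proof in cleaner form and supplies the constant. One small inaccuracy that does not affect the argument: the equatorial set $\{r=R\}$ is not part of the singular set of $\Sigma$ (the tangent plane there is vertical, so it cannot coincide with the contact plane; the singular points of such a rotationally symmetric surface lie only on the $x_{2n+1}$-axis), but since it has vanishing p-area measure your discarding of it is harmless.
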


Since the p-areas for Pansu sphere $\mathcal{P}_\lambda^n$ can be obtained by the equation \eqref{pansuarea} below, an immediate application of Theorem \ref{mainthm1} is that the expected value of the function $F_{\tilde{N}(p)}(\Sigma):= \mathcal{A}(\Sigma| \tilde{N}(p)^\perp)$ can be obtained by
\begin{align*}
Exp(F_{\tilde{N}(p)}):= \frac{\int_{p\in \mathcal{P}_\lambda^n} \mathcal{A}(\Sigma| \tilde{N}(p)^\perp) d\Sigma_p}{\mathcal{A}(\mathcal{P}_\lambda^n)}=\mathcal{A}(\Sigma)\omega_{2n-1}S_{2n-1}.
\end{align*}
Roughly speaking, the number $Exp(F_{\tilde{N}(p)}(\Sigma))$ is the average projected p-areas of the surface $\Sigma$ onto a randomly-chosen plane $\tilde{N}(p)^\perp$.

This paper is organized as follows. Section \ref{preliminary} recall some fundamental background about the Heisenberg groups regarded as pseudohermitian manifolds. The precise expressions for the Pansu spheres, the p-normal vectors for rotationally symmetric surfaces, and the p-areas will also be derived. Section \ref{allproofs} will show the proofs of our results and explains the geometric meanings of Proposition \ref{keyprop} in Remark \ref{geointerprtation}.

\textbf{Acknowledgement} The author thanks Professor Hung-Lin Chiu for a useful suggestion about considering the rotationally symmetric surfaces. This work was supported by Ministry of Science and Technology, Taiwan, with the grant number: 108-2115-M-024-007-MY2.

\section{Preliminary}\label{preliminary}
Let $(M, J,\Theta)$ be a $(2n + 1)$-dimensional pseudohermitian manifold with an integrable CR structure $J$ and a global contact form $\Theta$ such that the bilinear form
$G := \frac{1}{2}d\Theta(\cdot, J\cdot)$ is positive definite on the contact bundle $\xi:= ker\Theta$. The metric $G$ is usually called the Levi metric. Consider a hypersurface $\Sigma\subset M$. A point $p \in \Sigma$ is called singular if $\xi$ coincides with $T\Sigma$ at $p$. Otherwise, $p$ is called nonsingular and $V :=\xi \cap T\Sigma$ is $(2n-1)$-dimensional in this case. There is a unique (up to a sign) unit vector $N \in \xi$ that is perpendicular to $V$ with respect to the Levi metric $G$. We call $N$ the \textit{Legendrian normal} or the \textit{p-normal vector} (”p” stands for ”pseudohermitian”). Suppose that $\Sigma$ bounds a domain $D$ in $M$. The authors \cite{CHY} defined the p-area $2n-$form $d\Sigma$ by computing the first variation, away from the singular set, of the standard volume $\Theta\wedge (d\Theta)^n$ along the p-normal vector $N$:

\begin{align}
\delta_{fN}\int_D \Theta\wedge (d\Theta)^n=c(n)\int_\Sigma fd\Sigma,
\end{align}
where $f$ is a $C^\infty$-smooth function on $\Sigma$ with compact support away from the singular points, and $c(n)=2^n n!$ is a normalization constant. The sign of $N$ is determined by requiring that $d\Sigma$ is positive with respect to the induced orientation on $\Sigma$. Notice that the p-area $2n$-form can continuously extend to the set of singular points and vanish on the set, so that we can talk about the p-area of $\Sigma$ by integrating $d\Sigma$ over $\Sigma$ \cite[p 135]{CHMY1}.

One of the most interesting examples for pseudohermitian manifolds is the $n$-dimensional Heisenberg group $\mathbb{H}_n$, which can be regarded as a flat pseudohermitian manifold $(\mathbb{R}^{2n+1}, \Theta_0, J_0)$. Two surrey articles \cite{Cheng, Yang} gave some recent development on geometric analysis in $\mathbb{H}_n$. Here $\Theta_0:=  dz+ \sum_{j=1}^{n} (x_jdy_j-y_jdx_j)$ at a point $(\vec{X},z):=(x,y,z):=(x_1, y_1,\cdots, x_n, y_n, z)\in \mathbb{R}^{2n+1}$ and $J_0(\mathring{e}_{x_j})= \mathring{e}_{y_j}$, $J_0(\mathring{e}_{y_j})=-\mathring{e}_{x_j}$ where
\begin{align*}
\mathring{e}_{x_j}=\frac{\partial}{\partial x_j}+y_j \frac{\partial}{\partial z},\
\mathring{e}_{y_j}=\frac{\partial}{\partial y_j}-x_j \frac{\partial}{\partial z}
\end{align*}
for $j = 1, \cdots, n$, span the contact plane $\xi_0 := ker \Theta_0$. Notice that $\mathring{e}_{x_j}$'s and $\mathring{e}_{y_j}$'s form an orthonormal basis with respect to the Levi metric $G_0:= (\sum_{j=1}^{n} dx_j \wedge dy_j)(\cdot, J_0 \cdot)$. $\mathbb{H}_n$ is also a Lie group with the natural left translation defined by
\begin{align*}
L_{(\vec{X},z)}(\vec{X'},z'):&=L_{(x_1,y_1,\cdots, x_n,y_n, z)} (x'_1, y'_1, \cdots, x'_n, y'_n, z')\\
&=\Big(x_1+x'_1,y_1+y'_1,\cdots,x_n+x'_n, y_n+y'_n, z+z'+\sum_{i=1}^{n}(y_ix'_i-x_iy'_i)\Big).
\end{align*}

Recall that the p-area form $d\Sigma_p$ at the point $p=(x_1,y_1,\cdots,x_n,y_n,z) \in \mathbb{H}_n$ for the graph $z=f(x,y)$ (see equation (2.7) \cite{CHY}) is given by
\begin{align}\label{parea}
d\Sigma_p=D(p) dx_1 dy_1\cdots dx_n dy_n,
\end{align}
where
\begin{align}\label{dform}
D(p)=\left[\sum_{j=1}^{n} (f_{x_j}-y_j)^2+ (f_{y_j}+x_j)^2\right] ^{\frac{1}{2}}.
\end{align}
In general (see \cite[p 260]{CHY}) the p-normal vector $N(p)$ for any graph $z=f(x,y)$ at the point $p=(x_1,y_1,\cdots,x_n,y_n,z)$ is defined by
\begin{align}\label{pnormal1}
N(p)=\frac{-1}{D(p)}\sum_{j=1}^{n}\left[ (f_{x_j}-y_j)\mathring{e}_{x_j}(p)+ (f_{y_j}+x_j)\mathring{e}_{y_j}(p)\right].
\end{align}

We also recall that the Pansu sphere $\mathcal{P}^n_\lambda$ \cite{Pansu} with radius $\frac{1}{\lambda}$ in $\mathbb{H}_n$ is the union of the graphs of the functions $f$ and $-f$,
where
\begin{align}\label{Pansu}
f(x,y)=\frac{1}{2\lambda^2}\left( \lambda \sqrt{x^2+y^2} \sqrt{1-\lambda^2(x^2+y^2)}+\cos^{-1}\lambda \sqrt{x^2+y^2} \right),
\end{align}
where $\sqrt{x^2+y^2}\leq \frac{1}{\lambda}$, $x:=(x_1,\cdots, x_n), y:=(y_1,\cdots, y_n)$, $x^2:=\sum_{j=1}^{n}x_j^2$, $y^2:=\sum_{j=1}^{n}y_j^2$. Note that the intersection of the Pansu sphere $\mathcal{P}^n_\lambda$ and the plane $\{z=0\}$ is the standard sphere $\mathbb{S}^{2n-1}_{1/\lambda}$ centered at the origin with radius $\frac{1}{\lambda}$.
Denote by $r=\sqrt{x^2+y^2}=\sqrt{\sum_{j=1}^{n}(x_j^2+y_j^2)}$. Clearly, take the partial derivatives with respect to $x_j$ and $y_j$ respectively for $j=1, \cdots, n$, we have
\begin{align}\label{partialxy}
f_{x_j}:&=\frac{\partial f}{\partial {x_j}}(x,y)=\frac{-\lambda x_j r}{\sqrt{1-\lambda^2 r^2}},  \\ f_{y_j}:&=\frac{\partial f }{\partial {y_j}}(x,y)=\frac{-\lambda y_j r}{\sqrt{1-\lambda^2 r^2}}.\nonumber
\end{align}

When the graph is a Pansu sphere $\mathcal{P}^n_\lambda$, by substituting \eqref{partialxy} into \eqref{dform}, we have
\begin{align}\label{pansudform}
D(p)=\frac{r}{\sqrt{1-\lambda^2r^2}},
\end{align}
and so by \eqref{pansudform} \eqref{pnormal1} \eqref{partialxy}, the p-normal vector at the point $p$ in $\mathcal{P}^n_\lambda$ can be written in terms of $r$, namely,
\begin{align}\label{pnormal2}
\tilde{N}(p)=\sum_{j=1}^{n}\left[(\lambda x_j +\frac{\sqrt{1-\lambda^2 r^2}}{r} y_j)\mathring{e}_{x_j}(p)+(\lambda y_j-\frac{\sqrt{1-\lambda^2 r^2}}{r}{x_j})\mathring{e}_{y_j}(p)\right].
\end{align}
By \eqref{parea}, \eqref{pansudform}, \eqref{Pansu}, and use the spherical coordinates on $\mathbb{R}^{2n}$, a straight computation shows that the p-area of the Pansu sphere in $\mathbb{H}_n$ is given by
\begin{align}\label{pansuarea}
\mathcal{A}(P^n_\lambda) &= \int_{p\in \mathcal{P}^n_\lambda} D(p)dx_1dy_1\cdots dx_ndy_n \\ \nonumber
&=2 \int_0^{\frac{1}{\lambda}}\int_{\mathbb{S}^{2n-1}} \frac{r^{2n}}{\sqrt{1-\lambda^2 r^{2}}}  dS^{2n-1}dr  \\ \nonumber
&= S_{2n-1}\frac{\sqrt{\pi}\Gamma(n+\frac{1}{2})}{\lambda^{2n+1}\Gamma(n+1)},
\end{align}
where $\Gamma(x)$ is the Gamma function and $S_{2n-1}$ is the (Euclidean) surface area of the $(2n-1)$-sphere in $\mathbb{R}^{2n}$. In particular, when $n=1$, $\mathcal{A}(\mathcal{P}^1_\lambda)=\frac{\pi^2}{\lambda^3}$.

\section{The proofs}\label{allproofs}
\begin{proof}[Proof of Proposition \ref{anydirection}]
We point out that although the proof below is only for $n=1$, the same argument can be applied to the higher dimensional Heisenberg groups.

Let $q=(r\cos\theta, r\sin\theta, h(r))$ be a point on $\Sigma$. Denote $h^+, h^-$ by the graph of $\{h\geq 0\}$ and $\{h<0\}$ respectively and $h_r^\pm:=\frac{dh^\pm}{dr}$. Use \eqref{parea}, \eqref{dform}, \eqref{pnormal1} for $n=1$ and set $\eta:=\cos\alpha-r\sin\theta\sin\alpha\cos\beta+r\cos\theta\sin\alpha\sin\beta$, we have
\begin{align*}
|L_{q_*}u(0)\cdot N(q)| d\Sigma_q&=\Big|\big(\sin\alpha\cos\beta \mathring{e}_1(q)+\sin\alpha\sin\beta \mathring{e}_2(q) +\eta T(q)\big)\\
&\hspace{1cm} \cdot \frac{-1}{D(q)}\big( ({h^+_r}\cos\theta -r\sin\theta)\mathring{e}_1(q)+({h^+_r}\sin\theta+r\cos\theta)\mathring{e}_2(q) \big)\Big|D(q)r dr d\theta\\
&=r|\sin\alpha||{h^+_r}\cos(\theta-\beta)-r\sin(\theta-\beta)| dr d\theta \\
&=r|\sin\alpha|\sqrt{({h^+_r})^2 +r^2}\Big|\big( \frac{{h^+_r}}{\sqrt{({h^+_r})^2+r^2}}\cos(\theta-\beta)-\frac{r}{\sqrt{({h^+_r})^2+r^2}}\sin(\theta-\beta)\big)\Big|dr d\theta \\
&=r |\sin\alpha|  \sqrt{({h^+_r})^2+r^2} \big| \cos(\theta-\beta+\phi)\big| dr d\theta,
\end{align*}
where $\cos\phi=\frac{{h^+_r}}{\sqrt{h_r^2-r^2}}$. We also have the similar result for the function $h^-$. Since the angles $\phi$ and $\beta$ both are independent of the angle $\theta$, by the continuity of $h^+_r$ and $h^-_r$ the projected p-area of $\Sigma$ is given by
\begin{align}\label{anydirection3}
\mathcal{A}(\Sigma|u(0)^\perp)&=\iint_{q\in \Sigma}  |L_{q_*}u(0)\cdot N(q)| d\Sigma_q \\
&=|\sin\alpha| \int_{0}^{R} r \sqrt{(h^+_r)^2+r^2} \int_{0}^{2\pi} |\cos(\theta-\beta+\phi)|  d\theta dr \nonumber \\
&\hspace{2cm}+|\sin\alpha| \int_{0}^{R} r \sqrt{(h^-_r)^2+r^2} \int_{0}^{2\pi} |\cos(\theta-\beta+\phi)|  d\theta dr  \nonumber  \\
&=4 |\sin\alpha| \Big( \int_0^R r\sqrt{(h^+_r)^2 +r^2} dr+\int_0^R r\sqrt{(h^-_r)^2 +r^2} dr\Big).  \nonumber
\end{align}
If the functions $h_r^+$ and $h_r^-$ are continuous on $[0,R]$, then the both integrals on the right-hand side of \eqref{anydirection3} are finite, and so $\mathcal{A}(\Sigma|u(0)^\perp)=C |\sin\alpha|$ for some constant $C$, depending only on $h^+(r)$ and $h^-(r)$. Otherwise, if any of $|h^+(r)|$ and $|h^-(r)|$ is bounded by $\frac{r}{\sqrt{R^2-r^2}}$ on the interval $[0,R)$, say $h^+(r)$, then we may choose a positive number $M>\sqrt{1+R^2}$ such that the following estimate holds:
\begin{align}\label{integralbounded}
\int_0^R r\sqrt{(h^+_r)^2+r^2} dr
&\leq \int_0^R r\sqrt{\frac{r^2}{R^2-r^2}+r^2} dr =\int^R_0 r^2 \sqrt{\frac{1+R^2-r^2}{R^2-r^2}} dr \\
&< R^2 M\int_0^R \sqrt{\frac{1}{R^2-r^2}}dr  = R^2 M \lim_{a\rightarrow R^-} \tan^{-1}\big(\frac{r}{\sqrt{R^2-r^2}} \big)\Big|_0^a =\frac{R^2M\pi}{2}<\infty.  \nonumber
\end{align}
Therefore, by the assumptions for $h^+(r), h^-(r)$, and \eqref{anydirection3}, we conclude that the projected p-area $\mathcal{A}(\Sigma|u(0)^\perp)< C |\sin\alpha|$
for some constant $C$ only depending on the function $h^\pm(r)$.

When $\alpha=\frac{\pi}{2}$, the vector $u(0)$ is a unit vector on the $xy$-plane, which is the pushforward of a p-normal vector $\tilde{N}(p)$ for some point $p$ on the Pansu sphere $\mathcal{P}^1_\lambda$. Thus, we have $u(0)=L_{{p^{-1}}_*}\tilde{N}(p)$ and the result follows immediately.
\end{proof}


In the following calculation, $p$ always stands for a fixed point on the Pansu sphere $\mathcal{P}^n_\lambda$ and the point $q$ is arbitrary on $\mathcal{P}^n_\lambda$. Notice that since the p-normal vector $\tilde{N}(p)$ of $\mathcal{P}^n_\lambda$ is on the contact plane $\xi_p$ at $p$, $\tilde{N}(p)$ can be carried parallel to the vector $L_{{qp^{-1}}_{*}}\tilde{N}(p)\in \xi_q$ by the pushforward $L_{{qp^{-1}}*}$ of the left translation $L_{qp^{-1}}:=L_q\circ L_{p^{-1}}$.

\begin{proof}[Proof of Proposition \ref{keyprop}]
First, we calculate the inner product of $L_{{qp^{-1}}_{*}}\tilde{N}(p)\cdot \tilde{N}(q)$ at $q\in \mathcal{P}_\lambda^n$ with respect to the Levi metric. Write $p=(x_j, y_j,f(x_j,y_j))$, $q=(\bar{x_j},\bar{y_j},f(\bar{x}_j, \bar{y}_j))$, where $f$ is the function defined by $\eqref{Pansu}$. Without loss of generality, we may assume that $q$ is on the upper half $\{f\geq 0\}$ of $\mathcal{P}^n_\lambda$. Let $r$ (resp. $\bar{r})$ denote the radial of the spherical coordinates of the points $(x_1,y_1, \cdots, x_n,y_n)$ (resp. $(\bar{x}_1, \bar{y}_1, \cdots, \bar{x}_n, \bar{y}_n)$) $\in \mathbb{R}^{2n}$. Since $\{ \mathring{e}_{x_j}(p), \mathring{e}_{y_j}(p)\}$ is an orthonormal basis in the contact plane $\xi_p$ for any $p\in\mathbb{H}_n$, by \eqref{pnormal2} and \eqref{pansudform}, we have
\begin{align}\label{npqnqdp}
&\indent|L_{{qp^{-1}}_{*}}\tilde{N}(p)\cdot \tilde{N}(q)|D(q)\nonumber \\
&=\Big|\sum_{j=1}^{n}\Big[(\lambda x_j +\frac{\sqrt{1-\lambda^2 r^2}}{r} y_j)\mathring{e}_{x_j}(q)+(\lambda y_j-\frac{\sqrt{1-\lambda^2 r^2}}{r}{x_j})\mathring{e}_{y_j}(q)\Big]\\ \nonumber
\hspace{3cm}&\cdot \sum_{j=1}^{n}\Big[(\lambda \bar{x}_j +\frac{\sqrt{1-\lambda^2 \bar{r}^2}}{\bar{r}} \bar{y}_j)\mathring{e}_{\bar{x}_j}(q)+(\lambda \bar{y}_j-\frac{\sqrt{1-\lambda^2 \bar{r}^2}}{\bar{r}}{\bar{x}_j})\mathring{e}_{\bar{y}_j}(q)\Big]\Big|\cdot \frac{\bar{r}}{\sqrt{1-\lambda^2\bar{r}^2}} \\ \nonumber
&=\Big|\sum_{j=1}^{n}\Big[ (\lambda x_j+\frac{\sqrt{1-\lambda^2 r^2}}{r}y_j)(\lambda \bar{x}_j+\frac{\sqrt{1-\lambda^2 \bar{r}^2}}{\bar{r}}\bar{y}_j)\\ \nonumber
\hspace{3cm}&+(\lambda y_j-\frac{\sqrt{1-\lambda^2 r^2}}{r}{x_j})(\lambda \bar{y}_j-\frac{\sqrt{1-\lambda^2 \bar{r}^2}}{\bar{r}}\bar{x}_j)\Big]\Big| \cdot \frac{\bar{r}}{\sqrt{1-\lambda^2\bar{r}^2}}\\ \nonumber
&=\Big|\sum_{j=1}^{n}\Big[ (\frac{\lambda^2 \bar{r}}{\sqrt{1-\lambda^2 \bar{r}^2}}+\frac{\sqrt{1-\lambda^2 r^2}}{r})(x_j\bar{x}_j+y_j\bar{y}_j) +(-\frac{\lambda \bar{r}}{r}\frac{\sqrt{1-\lambda^2 r^2}}{\sqrt{1-\lambda^2 \bar{r}^2}}+\lambda )(x_j\bar{y}_j-y_j\bar{x}_j)\Big]\Big|.
\end{align}
Observe that under the spherical coordinates, each of the components $x_j$ can be represented by
\begin{align*}
x_j=(\text{radial distance})\cdot (\text{product of sine and cosine functions})
\end{align*}
for $j=1,\cdots, n$. Thus, one can write the components $x_j=r a_j$ for some product $a_j$ of trigonometric functions. Similarly, one has $y_j=r b_j$, $\bar{x}_j=\bar{r} \bar{a}_j$, and $\bar{y}_j=\bar{r}\bar{b}_j$. Notice that the points $(a,b)=(a_1, b_1, \cdots, a_n,b_n)$ and $( \bar{a},\bar{b})=(\bar{a}_1,\bar{b}_1,\cdots, \bar{a}_n,\bar{b}_n)$ both are on the unit sphere $\mathbb{S}^{2n-1}\subset \mathbb{R}^{2n}$. Next, substitute $x_j$, $y_j$, $\bar{x_j}$, $\bar{y_j}$ by $r$, $\bar{r}$, $a_j$, $b_j$, $\bar{a}_j$, $\bar{b}_j$ in the last equation of \eqref{npqnqdp} and we get
\begin{align}\label{npnqdp}
|L_{{qp^{-1}}_{*}}\tilde{N}(p)\cdot \tilde{N}(q)|D(q)=\Big|\sum_{j=1}^{n}\Big[A(a_j\bar{a}_j+b_j\bar{b}_j)+B(a_j\bar{b}_j-b_j\bar{a}_j) \Big]\Big|,
\end{align}
where $A=\frac{\lambda^2 \bar{r}^2 {r}}{\sqrt{1-\lambda^2 \bar{r}^2}}+\bar{r}\sqrt{1-\lambda^2 r^2}$ and $B=-\frac{\lambda \bar{r}^2 \sqrt{1-\lambda^2 r^2}}{\sqrt{1-\lambda^2 \bar{r}^2}}+\lambda r \bar{r}$. We emphasis a key observation that
\begin{align}\label{key}
A^2+ B^2 = \frac{\bar{r}^2}{1-\lambda^2 \bar{r}^2},
\end{align}
which is independent of the choice of the point $p$.

Now consider the unit vectors $u=(a_1,b_1,\cdots, a_n, b_n)$ and $\bar{v}=(\bar{a}_1, \bar{b}_2, \cdots, \bar{a}_n,\bar{b}_n)$ in $\mathbb{R}^{2n}$ with respect to the usual Euclidean norm. Let $J:\mathbb{R}^{2n}\rightarrow \mathbb{R}^{2n}$ be the canonical almost complex structure defined by $J(u)=(-b_1, a_1,\cdots, -b_n, a_n)$. If $\theta$ is the angle between two unit vectors $u, \bar{v}$ (in the usual Euclidean norm) and denote $u \star \bar{v}$ by the Euclidean inner product of $u$ and $\bar{v}$, then we have $u \star \bar{v}=\cos\theta$ and $J(u) \star \bar{v}=\pm\sin\theta$. The sign depends on the chosen orientation of $\mathbb{R}^{2n}$. Therefore, the right-hand side of \eqref{npnqdp} can be represented as
\begin{align}\label{rotation}
\Big|\sum_{j=1}^{n}\Big[A(a_j\bar{a}_j+b_j\bar{b}_j)+B(a_j\bar{b}_j-b_j\bar{a}_j) \Big]\Big|
&=|A( u\star \bar{v})+B(J(u) \star\bar{v})|\\ \nonumber
&=\sqrt{A^2+B^2}\Bigg|\frac{A}{\sqrt{A^2+B^2}}\cos\theta_q \pm \frac{B}{\sqrt{A^2+B^2}}\sin\theta_q\Bigg|\\
&\stackrel{\eqref{key}}{=}\frac{\bar{r}}{\sqrt{1-\lambda^2 \bar{r}^2}}|\cos(\theta_q\mp \alpha)| \nonumber
\end{align}
for some value $\alpha=\alpha(r,\bar{r}, \lambda)$ such that $\cos(\alpha)=\frac{A}{\sqrt{A^2+B^2}}$. To our purpose we add the subscript $\theta_q$ to indicate that the angle $\theta$ is a function of the point $q$.

Recall that the p-area form $d\Sigma_q$ at the point $q$ in $\mathbb{H}_n$ for any graph $z=f(x,y)$ is
$d\Sigma_q=D(q) dx_1 dy_1\cdots dx_n dy_n$ (see \cite[equation (2.7)]{CHY}). Finally, combine \eqref{npnqdp}, \eqref{rotation} and use the integral for spherical coordinates, one has
\begin{align*}
\int_{q\in \mathcal{P}^n_\lambda} |L_{{qp^{-1}}_{*}}\tilde{N}(p)\cdot \tilde{N}(q)| d\Sigma_q
&=\int_{q\in \mathcal{P}^n_\lambda}  |L_{{qp^{-1}}_{*}}\tilde{N}(p)\cdot \tilde{N}(q)| D(q) dx_1 dy_1\cdots dx_n dy_n\\
&=\int_{p\in \mathcal{P}^n_\lambda}  \Big|\sum_{j=1}^{n}\Big[A(a_j\bar{a}_j+b_j\bar{b}_j)+B(a_j\bar{b}_j-b_j\bar{a}_j) \Big]\Big| dx_1 dy_1\cdots dx_n dy_n\\
&=2\int_{0}^{\frac{1}{\lambda}} \int_{0}^{2\pi} \frac{\bar{r}\cdot \bar{r}^{2n-1}}{\sqrt{1-\lambda^2 \bar{r}^2}}|\cos(\theta_q\mp \alpha)| dS_q d\bar{r}\\
&=2\int_{0}^{\frac{1}{\lambda}} \frac{\bar{r}^{2n}}{\sqrt{1-\lambda^2 \bar{r}^2}} \int_{0}^{2\pi} |\cos(\theta_q\mp \alpha)| dS_q d\bar{r} \\
&\stackrel{(*)}{=}4\omega_{2n-1} \int_{0}^{\frac{1}{\lambda}} \frac{\bar{r}^{2n}}{\sqrt{1-\lambda^2 \bar{r}^2}} d\bar{r}\\
&=2\frac{\sqrt{\pi}\omega_{2n-1}}{\lambda^{2n+1}}\frac{\Gamma(n+\frac{1}{2})}{\Gamma(n+1)},
\end{align*}
where $dS_q$ is the $(2n-1)$-dimensional (Euclidean) surface area element at $q$ and $\omega_{2n-1}$ is the volume of the unit $(2n-1)$-ball in $\mathbb{R}^{2n-1}$. Note that in $(*)$ above, for any fixed $\bar{r}\in (0,\frac{1}{\lambda}]$, (equivalently, $\alpha$ is a constant w.r.t. $q$), Lemma \ref{KR} implies that the integral $\int_0^{2\pi} |\cos(\theta_q \mp \alpha)|dS_q=2\omega_{2n-1}$, the projected area of the $(2n-1)$-dimensional sphere onto the $(2n-1)$-subspace in $\mathbb{R}^{2n}$. We left the last integral with respect to $\bar{r}$ to the reader and complete the proof.
\end{proof}

\begin{remark}\label{geointerprtation}
In the Euclidean space the projected area of $K$ onto the subspace $u^\perp$ can be represented as the integral of the inner product $u\star v$ as shown in \eqref{projecteda}. Suppose the unit vector $u$ is based at the origin and ends at somewhere on $\mathbb{S}^{n-1}$. The proof of Lemma \ref{KR} uses the fact that, by parallel transports in $\mathbb{R}^n$, the base point of $u$ is parallel moved to any point $p\in \mathbb{S}^{n-1}$ and make the inner product with the unit outward normal vectors $v(p)$ at $p$. Similarly, in $\mathbb{H}_n$ the integral of $|L_{qp^{-1}*}\tilde{N}(p) \cdot \tilde{N}(q)|$ over $\mathcal{P}^n_\lambda$ in Proposition \ref{keyprop} can be regarded as the projected p-area of $\mathcal{P}^n_\lambda$ onto the subspace $\tilde{N}(q)^\perp:= \{ \text{vector } u\in T_q\mathbb{H}_n| \  u\cdot N(q)=0 \text{ w.r.t. Levi metric}\}$. For instance, in $\mathbb{H}_1$, we have the orthonormal basis $\{e_1(q), N(q), T\}$ for $e_1(q)\in T_q\Sigma \cap \xi_q$, $N(q)=J(e_1(q))\in \xi_q$, and $T(q)=\frac{\partial}{\partial z}|_q$. Then $N(q)^\perp = span_q\{e_1(q), T(q)\}$. Geometrically $N(q)^\perp$ is a plane (Euclidean) perpendicular to the $xy$-plane.
\end{remark}

The next lemma shows that the map $L_{{p^{-1}}_*}$ is surjective.
\begin{lem}\label{surjective}
Let $\mathcal{P}^n_\lambda$ be the Pansu sphere in the Heisenberg group $\mathbb{H}_n$ defined by \eqref{Pansu} and denote $\xi_0$ by the contact plane at the origin in $\mathbb{H}_n$. For any $p\in \mathcal{P}_\lambda^n$ the map
\begin{align*}
p\mapsto N(0):=L_{{p^{-1}}_*}(\tilde{N}(p))\subset \mathbb{S}^{2n-1}\subset \xi_0
\end{align*}
which assigns to any unit p-normal vector $\tilde{N}(p)$ at $p\in \mathcal{P}^n_\lambda$ a unit p-normal vector $N(0)$ in $\mathbb{S}^{2n-1}$, is surjective.
\end{lem}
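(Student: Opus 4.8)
The plan is to reduce the statement to elementary linear algebra: first identify how the pushforward $L_{{p^{-1}}_*}$ acts on the contact planes, then write $L_{{p^{-1}}_*}(\tilde N(p))$ in closed form using \eqref{pnormal2}, and finally exhibit an explicit preimage in $\mathcal{P}^n_\lambda$ for every unit vector of $\mathbb{S}^{2n-1}\subset\xi_0$. (This makes precise the assertion $\{L_{{p^{-1}}_*}\tilde N(p)\mid p\in\mathcal{P}^n_\lambda\}=\mathbb{S}^{2n-1}$ already used after Definition \ref{projectdef}.)

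First I would note that the frame $\{\mathring{e}_{x_j},\mathring{e}_{y_j}\}_{j=1}^n$ is left-invariant on $\mathbb{H}_n$, that the group inverse of $p=(x,y,z)$ is $p^{-1}=(-x,-y,-z)$, and that $L_{p^{-1}}(p)=0$; consequently $L_{{p^{-1}}_*}$ sends $\mathring{e}_{x_j}(p)\mapsto\mathring{e}_{x_j}(0)$ and $\mathring{e}_{y_j}(p)\mapsto\mathring{e}_{y_j}(0)$. Substituting into \eqref{pnormal2}, for $p\in\mathcal{P}^n_\lambda$ with $r=\sqrt{x^2+y^2}$ one gets
\[ N(0):=L_{{p^{-1}}_*}(\tilde N(p))=\sum_{j=1}^n\Big[\big(\lambda x_j+\tfrac{\sqrt{1-\lambda^2r^2}}{r}y_j\big)\mathring{e}_{x_j}(0)+\big(\lambda y_j-\tfrac{\sqrt{1-\lambda^2r^2}}{r}x_j\big)\mathring{e}_{y_j}(0)\Big]. \]

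Next I would substitute $x_j=ra_j$, $y_j=rb_j$ (so that $(a_1,b_1,\dots,a_n,b_n)\in\mathbb{S}^{2n-1}$), identify $\xi_0$ with $\mathbb{C}^n$ through $\mathring{e}_{x_j}(0)+i\,\mathring{e}_{y_j}(0)$, and put $w_j=a_j+ib_j$, $\cos\psi=\lambda r$, $\sin\psi=\sqrt{1-\lambda^2r^2}$ for $\psi\in[0,\tfrac{\pi}{2}]$. A one-line computation then shows that the $j$-th complex component of $N(0)$ equals $e^{-i\psi}w_j$; in particular $|N(0)|^2=\sum_j|w_j|^2=1$, so the map indeed lands in $\mathbb{S}^{2n-1}\subset\xi_0$. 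For surjectivity, given an arbitrary $N(0)\in\mathbb{S}^{2n-1}$ written as $(\zeta_1,\dots,\zeta_n)\in\mathbb{C}^n$ with $\sum_j|\zeta_j|^2=1$, I would take $p$ on the equator $\{r=\tfrac1\lambda\}\subset\mathcal{P}^n_\lambda\cap\{z=0\}$, where $\sin\psi=0$ and the $j$-th component reduces to $w_j$; choosing $a_j=\operatorname{Re}\zeta_j$, $b_j=\operatorname{Im}\zeta_j$, i.e. $x_j=\tfrac1\lambda\operatorname{Re}\zeta_j$, $y_j=\tfrac1\lambda\operatorname{Im}\zeta_j$, gives a point $p\in\mathcal{P}^n_\lambda$ with $L_{{p^{-1}}_*}(\tilde N(p))=N(0)$. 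The same computation in fact shows that, restricted to any latitude sphere $\{r=\mathrm{const}\in(0,\tfrac1\lambda]\}$, the map is a rescaling composed with the complex rotation $e^{-i\psi}$, hence already onto $\mathbb{S}^{2n-1}$.

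The only step demanding genuine care is the identification of $L_{{p^{-1}}_*}$ on the contact frames; once $\tilde N(p)$ is expressed in the left-invariant frame at the origin, the remainder is purely formal, and I do not anticipate a serious obstacle.
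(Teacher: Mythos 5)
Your proof is correct and takes essentially the same route as the paper: the paper also establishes surjectivity by assigning to a given $N(0)\in\mathbb{S}^{2n-1}\subset\xi_0$ the equatorial point $p=\tfrac{1}{\lambda}N(0)\in\mathcal{P}^n_\lambda\cap\{z=0\}$ and checking via \eqref{pnormal2} and the left-invariance of the frame $\{\mathring{e}_{x_j},\mathring{e}_{y_j}\}$ that $L_{{p^{-1}}_*}\tilde{N}(p)=N(0)$. Your extra observation that on each latitude $\{r=\mathrm{const}\}$ the map acts as the complex rotation $e^{-i\psi}$ is a correct refinement, but it is not needed beyond the equatorial case used for surjectivity.
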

\begin{proof}
For any unit vector $N(0)$ on $\mathbb{S}^{2n-1}\subset \xi_0$ in $\mathbb{H}_n$, one may assume that $N(0)=(x_1,y_1,\cdots, x_n,y_n,0)$ with $\sum_{j=1}^n (x_j^2+y_j^2)=1$. Set $p=\frac{1}{\lambda}N(0)$. Then the point $p$ is in the intersection $\mathcal{P}^n_\lambda \cap \{ z=0\}$. Construct the unit p-normal $\tilde{N}(p)$ by replacing $x_j's$ and $y_j's$  on right-hand side of \eqref{pnormal2} by $\frac{1}{\lambda}x_j's$ and $\frac{1}{\lambda}y_j's$, respectively. It is clear that the vector $\tilde{N}(p)$ is the p-normal vector at $p\in \mathcal{P}^n_\lambda$ satisfying $L_{{p^{-1}}_*}\tilde{N(p)}=N(0)$.
\end{proof}

We mention that the map in Lemma \ref{surjective} is a generalization of the Gauss map defined by Chiu-Ho \cite{Chiuho} in $\mathbb{H}_1$. In that paper the authors studied the degree of the Gauss map for horizontally regular curves with some topological properties (see Theorem 1.6 and Theorem 2.1 in \cite{Chiuho}).

\begin{proof}[Proof of Theorem \ref{mainthm1}]
For any p-normal vector $N(q)$ at $q\in \Sigma$ one has that $L_{{q^{-1}}_*}N(q)\subset \mathbb{S}^{2n-1}$. Moreover, Lemma \ref{surjective} implies that there exists a p-normal vector $\tilde{N}(q')$ at some $q'\in \mathcal{P}^n_\lambda$ such that $L_{{q'q^{-1}}_*}N(q)=\tilde{N}(q')$, equivalently,
$$L_{{q^{-1}}_*}N(q)=L_{{q'^{-1}}_*}\tilde{N}(q').$$
Thus, for any fixed p-normal vector $\tilde{N}(p)$ at $p\in \mathcal{P}^n_\lambda$ we have
\begin{align}\label{surjective2}
|L_{qp^{-1}*}\tilde{N}(p) \cdot N(q)|&=| L_{{p^{-1}}_*}\tilde{N}(p)\cdot  L_{{q^{-1}}_*}N(q)|\\
&=| L_{{p^{-1}}_*}\tilde{N}(p)\cdot  L_{{q'^{-1}}_*}\tilde{N}(q')| \nonumber \\
&=| L_{{q'p^{-1}}_*}\tilde{N}(p)\cdot  \tilde{N}(q')|.\nonumber
\end{align}
Finally, use \eqref{surjective2}, Proposition \ref{keyprop}, one gets
\begin{align*}
\int_{p\in \mathcal{P}^n_\lambda} \mathcal{A}(\Sigma | \tilde{N}(p)^\perp) d\Sigma_p
&=\int_{p\in\mathcal{P}^n_\lambda} \int_{q\in\Sigma} |L_{qp^{-1}*}\tilde{N}(p) \cdot N(q)| d\Sigma_q d\Sigma_p \\
&= \int_{q\in\Sigma} \int_{p\in\mathcal{P}^n_\lambda}   |L_{qp^{-1}*}\tilde{N}(p) \cdot N(q)|  d\Sigma_p d\Sigma_q \\
&= \int_{q\in\Sigma} \int_{p\in\mathcal{P}^n_\lambda}   | L_{{q'p^{-1}}_*}\tilde{N}(p)\cdot  \tilde{N}(q')| d\Sigma_p d\Sigma_q \\
&=2 C_n \omega_{2n-1}  \int_{q\in\Sigma} d\Sigma_q \\
&=2 C_n \omega_{2n-1} \mathcal{A}(\Sigma)
\end{align*}
and complete the proof of \eqref{formula1}. The second result \eqref{formula2} can be immediately obtained by \eqref{pansuarea} and \eqref{formula1}.
\end{proof}

In the next proof, we only show the case in $\mathbb{H}_1$ for simplicity and the proofs for higher dimensions are same. When $n\geq 2$, the constant $C$ in Theorem \ref{rotationalpop} depends only on the dimension $n$ of $\mathbb{H}_n$.

\begin{proof}[Proof of Theorem \ref{rotationalpop} for $n=1$]
Let $\Sigma$ be any rotationally symmetric compact surface in $\mathbb{H}_1$ defined by $(r,\theta)\rightarrow (r\cos\theta, r\sin\theta,h(r))$, $0\leq r\leq R$ for some real number $R$ and $0\leq \theta \leq 2\pi$. Let $\mathcal{P}^1_\lambda$ be the Pansu sphere in $\mathbb{H}_1$ defined by \eqref{Pansu} and $p$ a fixed point on $\mathcal{P}^1_\lambda$. Denote $h^+, h^-$ by the graph of $\Sigma^+:=\{h\geq 0\}$ and $\Sigma^-:=\{h<0\}$ respectively. By using \eqref{parea}, \eqref{dform}, \eqref{pnormal1}, \eqref{pnormal2} for $n=1$ and write $p=(\tilde{r}\cos\theta, \tilde{r}\sin\theta, h^+(r))$, one has
\begin{align*}
|L_{{qp^{-1}}_*} \tilde{N}(p) \cdot N(q)| d\Sigma_q
&= \Big| L_{{qp^{-1}}_*} \big( (\lambda \tilde{r}\cos\theta+\sqrt{1-\lambda^2 \tilde{r}^2} \sin\theta )\mathring{e}_1(p)+(\lambda \tilde{r}\sin\theta -\sqrt{1-\lambda^2 \tilde{r}^2} \cos\theta)\mathring{e}_2(p)\big)\\
&\hspace{1cm}\cdot \frac{-1}{D(q)}\Big( (h^+_r\cos\phi -r\sin\phi)\mathring{e}_1(q)+(h^+_r\sin\phi+r\cos\phi)\mathring{e}_2(q)\Big)\Big|D(q)r dr d\phi \\
&=\Big| \big(  (\lambda \tilde{r}\cos\theta+\sqrt{1-\lambda^2 \tilde{r}^2}\sin\theta )\mathring{e}_1(q)+(\lambda \tilde{r}\sin\theta -\sqrt{1-\lambda^2 \tilde{r}^2} \cos\theta)\mathring{e}_2(q)\big) \\
&\hspace{1cm} \cdot \big( (h^+_r\cos\phi -r\sin\phi)\mathring{e}_1(q)+(h^+_r\sin\phi+r\cos\phi)\mathring{e}_2(q)\big)\Big| r dr d\phi \\
&=|(\lambda  h^+_r \tilde{r}-r\sqrt{1-\lambda^2\tilde{r}^2} )\cos(\theta-\phi) + (\lambda r \tilde{r} +h^+_r \sqrt{1-\lambda^2 \tilde{r}^2})\sin(\theta-\phi)|r dr d\phi
\end{align*}
Set $A:=\lambda  h^+_r \tilde{r}-r\sqrt{1-\lambda^2\tilde{r}^2}$ and $B:=\lambda r \tilde{r} +h^+_r \sqrt{1-\lambda^2 \tilde{r}^2}$, and so $A^2+B^2=(h^+_r)^2 +r^2$. We point out that the term $A^2+B^2$ does not involve any terms about $\tilde{r}$ and $\theta$, namely, it is independent of the choices of the p-normal $\tilde{N}(p)$ of the Pansu sphere. Now we set $\cos\psi =\frac{A}{\sqrt{A^2+B^2}}$ and $\sin\psi=\frac{B}{\sqrt{A^2+B^2}}$ and notice that the angle $\psi$ does not depend on the angle $\phi$ and so when taking the integral with respect to the angle $\phi$, the angle $\phi$ can be regarded as a constant. Then one has the following formula for the projected p-area
\begin{align}\label{rotasymm}
\mathcal{A}(\Sigma^+|\tilde{N}(p)^\perp)
&=\iint_{q\in \Sigma}|L_{{qp^{-1}}_*}\tilde{N}(p) \cdot N(q)| d\Sigma_q \\
&=\iint_{q\in \Sigma} r \sqrt{A^2+B^2} \Big| \frac{A}{\sqrt{A^2+B^2}} \cos(\theta-\phi)+\frac{B}{\sqrt{A^2+B^2}} \sin(\theta-\phi)\Big| dr d\phi  \nonumber  \\
&:=\iint_{q\in \Sigma} r \sqrt{(h^+_r)^2 +r^2}\big| \cos\psi \cos(\theta-\phi)+\sin\psi \sin(\theta-\phi)\big|  dr d\phi \nonumber  \\
&=\int_{0}^R r\sqrt{(h^+_r)^2+r^2} \Big( \int_{0}^{2\pi} |\cos(\theta-\phi-\psi)|  d\phi \Big) dr  \nonumber  \\
&=C\int_0^R r \sqrt{(h^+_r)^2+r^2} dr\nonumber
\end{align}
for some constant $C$. Here we have used the fact that the angles $\psi$ and $\theta$ are independent of $\phi$. By using the assumptions for the functions $h^\pm(r)$ and applying the same argument in Proposition \ref{anydirection} (below the equation \eqref{anydirection3}), we conclude that the projected p-area $\mathcal{A}(\Sigma|\tilde{N}(p)^\perp)=\mathcal{A}(\Sigma^+|\tilde{N}(p)^\perp)+\mathcal{A}(\Sigma^-|\tilde{N}(p)^\perp)$ is a constant as desired and complete the proof of Theorem \ref{rotationalpop}.
\end{proof}


Finally, as we have known that the Cauchy's surface area formula in the Euclidean spaces is a special case for a generalized notion, called querrmassintegrals, in Integral Geometry \cite{Santalo}. Recently, the author showed some results of Integral Geometry in $\mathbb{H}_n$ (see \cite{Huang} for Crofton's formula and the containment problems), it is still not clear that whether we can have the counterparts of querrmassintegrals in the Heisenberg groups $\mathbb{H}_n$ for any $n\geq 1$. It is also not clear that whether several concepts (for instance, the support functions in $\mathbb{R}^n$ for the convex bodies) related to that of convexity in Convex Geometry is capable of being developable in $\mathbb{H}_n$ or not. Those might be the interesting topics worth to develop for the future study.


\begin{thebibliography}{99}
\bibitem{Balogh} Z. Balogh, \textit{Size of characteristic sets and functions with prescribed gradient}, J. Reine, Angew. Math., 564(2003) 63–83.

\bibitem{Balogh2} Zolt\'{a}n M. Balogh, Katrin F\"{a}ssler, Pertti Mattila, Jeremy T. Tysonc, \textit{Projection and slicing theorems in Heisenberg groups}, Advances in Mathematics, vol 231, 2, pp. 569-604, 2012.

\bibitem{Bon29} T Bonnesen, \textit{Les probl\'{e}mes des isop\'{e}rim\'{e}tres et des is\'{e}piphanes}, 1929.

\bibitem{Cau41} Augustin Cauchy, \textit{Note sur divers th\'{e}or\'{r}ems relatifs \'{a} la rectification des courbes et \'{a} la quadrature des surfaces}, C.R. Acad. Sci., 13:1060-1065, 1841.

\bibitem{Cau50} Augustin Cauchy, \textit{M\'{e}moire sur la rectification des courbes et la quadrature des surfaces courbes}, M\'{e}m. Acad. Sci, 22(3), 1850.

\bibitem{Cheng} J.H. Cheng, \textit{Surfaces of prescribed p-mean curvature in the Heisenberg group}, In Proceedings of the 4th ICCM 2007 Dec 17 (Vol. 2, pp. 40-58).

\bibitem{CHY} Jih-Hsin Cheng, Jenn-Fang Hwang, Paul Yang, \textit{Existence and uniqueness for P-area minimizers in the Heisenberg group}, Mathematische Annalen, 2007, Volume 337, Issue 2, pp 253–293.

\bibitem{CCHY} J.H. Cheng, H.L. Chiu, J.F. Hwang, P. Yang, \textit{Umbilicity and characterization of Pansu spheres in the Heisenberg group}, J. Reine Angew. Math. (2015).

\bibitem{CHMY1} Jih-Hsin Cheng, Jenn-Fang Hwang, Andrea Malchiodi, Paul Yang, \textit{Minimal surfaces in pseudohermitian geometry}, Annali della Scuola Normale Superiore di Pisa, Classe di Scienze,  S\'{e}rie 5,  Volume 4 (2005) no. 1,  pp 129-177.

\bibitem{Chiuho} Hung-Lin Chiu, Pak Tung Ho, \textit{Global differential geometry of curves in three dimensional Heisenberg group and CR sphere}, J. Geom. Anal. 29 (2019), p.3438–3469.

\bibitem{DGN} Donatella Danielli, Nicola Garofalo, Duy-Minh Nhieu, \textit{A partial solution of the isoperimetric problem for the Heisenberg group}, Forum Mathematicum, 2008, Volume 20, Issue 1, Pages 99–143.

\bibitem{FS} B. Franchi, R. Serapioni, F. Serra Cassano, \textit{Rectifiability and perimeter in the Heisenberg group}, Math. Ann., 321(2001) 479–531

\bibitem{Gardner} Richard J. Gardner, \textit{Geometric Tomography (Encyclopedia of Mathematics and its Applications)}, Cambridge University Press, 2006, 2nd Edition.

\bibitem{GN} N. Garofalo, D.M. Nhieu, \textit{Isoperimetric and Sobolev inequalities for Carnot-Caratheodory spaces and the existence of minimal surfaces}, Comm. Pure Appl. Math., 49(1996), 1081–1144.

\bibitem{Huang} Y.C. Huang, \textit{Applications of integral geometry to geometric properties of sets in the 3D-Heisenberg group}, Anal. Geom. Metr. Spaces, Vol. 4, No. 1,   425-435, 2016.

\bibitem{H2} Y.C. Huang, \textit{Generalizations of the theorems of Pappus-Guldin in the Heisenberg groups}, submitted, arXiv:2001.04578.

\bibitem{HR} A. Hurtado  C. Rosales, \textit{Existence, characterization and stability of Pansu spheres in sub-Riemannian 3-space forms}, Cal. of Var. and PDEs, v 54, pp 3183-3227(2015).

\bibitem{KR} Daniel A. Klain, Gian-Carlo Rota, \textit{Introduction to Geometric Probability}, Cambridge University Press, 1997.

\bibitem{Kub25} T. Kubota, \textit{uber konvex-geschlossene manningfaltigkeiten im n-dimensionalen raume}, Sci. Rep. Tohoku Univ., 14, 85-99, 1925.

\bibitem{LM} G.P. Leonardi, S. Masnou, \textit{On the isoperimetric problem in the Heisenberg group $H^n$}, Ann. Mat. Pura Appl., (IV)(4)184(2005), 533–553.

\bibitem{LR} G.P. Leonardi, S. Rigot, \textit{Isoperimetric sets on Carnot groups}, Houston J. Math.,(3)29(2003), 609–637.

\bibitem{MTL}  Michael I. Mishchenko, Larry D. Travis, Andrew A. Lacis, \textit{Scattering, Absorption, and Emission of Light by Small Particles}, Cambridge Univ. Press, edition 1, 2002.

\bibitem{Min} H. Minkowski, \textit{Theorie der konvexen K{\"o}rper, insbesondere Begr{\"u}ndung ihres Oberfl{\"a}chenbegriffs}, Ges. Abh., 2, 131-229, Leipzig, 1911. Reprinted by Chelsea, 1967.

\bibitem{Monti2} R. Monti, \textit{Rearrangements in metric spaces and in the Heisenberg group}, J. Geom. Anal. 24(4), 1673–1715 (2014).

\bibitem{Monti} R. Monti, \textit{Brunn-Minkowski and isoperimetric inequality in the Heisenberg group}, Annales Academiæ Scientiarum Fennicæ, Mathematica, vol 28, 2003, pp 99-109.

\bibitem{RR1} R. Monti, Matthieu Rickly, \textit{Convex isoperimetric sets in the Heisenberg group} Annali della Scuola Normale Superiore di Pisa, Série 5,  v8 (2009) no. 2,  p. 391-415.

\bibitem{MR} R. Monti, M. Rickly , \textit{Convex isoperimetric sets in the Heisenberg group}, Annali della Scuola normale superiore di Pisa-classe di scienze. 2009;8(2):391-415.

\bibitem{Pansu} P. Pansu, \textit{Une inegalite isoperimetrique sur le groupe de Heisenberg}, C. R. Acad. Sci. Paris S\'er. I Math., (2), 295, 127–130, 1982.

\bibitem{RR} Manuel Ritor\'e, Cesar Rosales, \textit{Area-stationary surfaces in the Heisenberg group}, Advances in Mathematics, vol 219, Issue 2, 2008, pp 633-671.


\bibitem{Santalo} Luis A. Santal\'{o}, \textit{Integral Geometry and Geometric Probability}, Addison-Wesley Pub. Co., Advanced Book Program, 1976.

\bibitem{Schneider} Rolf Schneider, \textit{Convex Bodies: The Brunn-Minkowski Theory (Encyclopedia of Mathematics and its Applications)}, Cambridge University Press, 2n edition, 2013.

\bibitem{TV} Emmanuel Tsukerman, Ellen Veomett, \textit{Brunn-Minkowski Theory and Cauchy’s Surface Area Formula}, The American Mathematical Monthly, Vol. 124, No. 10, 2017, pp. 922-929.

\bibitem{Vouk} V. Vouk, \textit{Projected Area of Convex Bodies}, Nature, 162(4113), pp 330–331, 1948.

\bibitem{Yang} P. Yang, \textit{Pseudo-Hermitian Geometry in 3-D}, Milan Journal of Mathematics volume 79, pages181–191, 2011.

\end{thebibliography}
\end{document}